\documentclass[11pt]{article}

\usepackage[margin=1in]{geometry}
\usepackage{amsmath,amssymb,amsthm,mathtools}
\usepackage{thmtools}
\usepackage{bm}
\usepackage{enumitem}

\usepackage[affil-it]{authblk}

\usepackage{hyperref}
\usepackage[nameinlink,noabbrev]{cleveref}
\crefname{appendix}{Appendix}{Appendices}

\hypersetup{
  colorlinks=true,
  linkcolor=blue,
  citecolor=blue,
  urlcolor=blue
}

\newtheorem{theorem}{Theorem}[section]
\newtheorem{lemma}[theorem]{Lemma}
\newtheorem{proposition}[theorem]{Proposition}
\newtheorem{corollary}[theorem]{Corollary}
\newtheorem{remark}[theorem]{Remark}

\newcommand{\C}{\mathbb{C}}
\newcommand{\R}{\mathbb{R}}

\newcommand{\ii}{\mathrm{i}}

\newcommand{\tr}{\operatorname{tr}}

\DeclareMathOperator{\He}{H}
\DeclareMathOperator{\Sk}{S}
\DeclareMathOperator{\Rea}{Re}
\DeclareMathOperator{\Ima}{Im}

\newcommand{\SC}{\mathcal{S}_{\C}}
\newcommand{\SR}{\mathcal{S}_{\R}}
\newcommand{\SF}{\mathcal{S}_{\mF}}

\newcommand{\mR}{\mathbb{R}}

\newcommand{\mC}{\mathbb{C}}

\newcommand{\mF}{{\mathbb F}}
\newcommand{\gln}{{\mathbb{GL}_n}}

\newcommand{\mU}{{\mathbb U}}

\newcommand{\diag}{{\text{diag}}}

\makeatletter
\def\iddots{\mathinner{\mkern1mu\raise\p@
\vbox{\kern7\p@\hbox{.}}\mkern2mu
\raise4\p@\hbox{.}\mkern2mu\raise7\p@\hbox{.}\mkern1mu}}
\makeatother

\usepackage{xcolor}

\title{On the real angular stability radius for sectorial matrices}

\author{Sei Zhen Khong}
\affil{Department of Electrical Engineering\\ National Sun Yat-sen University,\newline Kaohsiung 804201, Taiwan}

\author{Xin Mao}
\affil{School of Data Science and Society\\ University of North Carolina at Chapel Hill,\newline Chapel Hill, NC 27599, USA}

\author{Li Qiu}
\affil{School of Science and Engineering\\ The Chinese
University of Hong Kong,\newline Shenzhen 518172, Guangdong, China}

\author{Axel Ringh%
\thanks{
  Corresponding author; email: \texttt{axelri@chalmers.se}}}
\affil{Department of Mathematical Sciences\\ Chalmers University of Technology and\newline University of Gothenburg\\ 412 96, Gothenburg, Sweden}

\date{\today}

\begin{document}
\maketitle

\begin{abstract}
The real stability radius problem asks for the smallest real perturbation that makes a given matrix lose rank, and it is a structured counterpart of the corresponding complex stability radius problem. In this paper, we consider an angular version of this question for sectorial matrices, where the size of a perturbation is measured by its largest phase instead of its largest singular value. More precisely, for a sectorial matrix $M$, we investigate the minimum required largest phase of a real and sectorial perturbation $\Delta$ for which $I+M\Delta$ loses rank.
We show that for some matrices, the complex and the real angular stability radius are the same, but that they are in general different, such as when $M$ is diagonal or complex symmetric. For these cases, we derive a formula for the real angular stability radius, and this formula depends on the two largest phases of $M$. Moreover, in these cases we show that the result for $n \times n$ matrices can be reduced to that for $2 \times 2$ matrices by compressing an arbitrary real destabilizing perturbation to the real two-dimensional subspace spanned by the real and imaginary parts of a destabilizing vector.
\end{abstract}

\tableofcontents

\section{Introduction}
The problem of computing the real stability radius is a stability radius problem for structured perturbations, as formulated in 
\cite{HinrichsenPritchard1986b}.
This type of problem is of interest in many engineering applications, where it is important that a matrix has all its eigenvalues in a prescribed area of the complex plane.
In particular, such problems are of interest in robust control, where they have been extensively studied \cite{VanLoan1985, HinrichsenPritchard1986a, HinrichsenPritchard1986b,
Byers1988, QiuDavison1989, HinrichsenPritchard1990, QiuDavison1991, QiuDavison1992, QiuBernhardssonRantzerDavisonYoungDoyle1995}; see also the literature on structured singular values, e.g., \cite{Doyle1982, PackardDoyle1993, ZhouDoyleGlover1996, GreenLimebeer1995}.
In their simplest form, a structured stability radius problem can be formulated as follows: given a complex matrix \(M \in \C^{n \times n}\), what is the smallest matrix \(\Delta \in \Omega \subseteq \C^{n\times n}\) such that \(I + M \Delta\) loses rank? That is, solve
\begin{equation}\label{eq:real_stab_radius_problem}
\inf_{\Delta \in \Omega} \; \{ \sigma_1(\Delta) \; \colon  \; \det(I + M \Delta) = 0 \},
\end{equation}
where $\sigma_1(\cdot)$ denotes the largest singular value of a matrix.
When $\Omega = \C^{n \times n}$, it is well-known that the solution is $1/\sigma_1(M)$. The real stability radius problem is where $\Omega = \R^{n \times n}$, and the problem (in fact, a more general problem) was solved in \cite{QiuBernhardssonRantzerDavisonYoungDoyle1995}: for $\C^{n \times n} \ni M = X + iY$, where $X,Y \in \R^{n \times n}$, the reciprocal of the radius is given as the solution to the one-parameter optimization problem
\[
\inf_{\gamma \in (0, 1]} \; \left\{ \sigma_2 \left( \begin{bmatrix}
    X & - \gamma Y \\ \gamma^{-1} Y & X
\end{bmatrix} \right) \right\},
\]
where $\sigma_2(\cdot)$ denotes the second largest singular value.

For a complex scalar $z$ we can write it in polar form $z = re^{i\theta}$, where $r = |z|$ is the magnitude and $\theta = \angle z$ is the phase. The singular values of a matrix are well-established and generally accepted as the concept that generalizes the notion of magnitude to matrices. That is why \eqref{eq:real_stab_radius_problem} is called a stability \emph{radius} problem. However, the notion of phases of a matrix does not have a uniformly accepted definition in the same way. One line of work defines phases for a class of matrices called sectorial matrices \cite{FurtadoJohnson2001, FurtadoJohnson2003, Zhang2015, WangChenKhongQiu2020,WangMaoChenQiu2023}, i.e., phases are defined for matrices whose canonical congruence form is diagonal \cite{HornSergeichuk2006}. This definition of phases is part of a large literature related to sectorial and accretive matrices, as well as the numerical range of a matrix, see, e.g., \cite{HornSteinberg1959, DePrimaJohnson1974, BallantineJohnson1975, JohnsonFurtado2001, ArlinskiiPopov2003, GeorgeIkramov2005, LiSze2014, DruryLin2014, drury2015principal, BedraniKittanehSababheh2021AFA, BedraniKittanehSababheh2021Positivity, zhao2022low, RinghQiu2022, zhang2025matrix}.

\begin{remark}\label{rem:phases_mimo_systems}
The phase theory based on sectorial matrices has also been extended to dynamical systems; for results related to linear time-invariant systems see \cite{ChenWangKhongQiu2019, ChenWangKhongQiu2024, MaoChenQiu2022, yang2025small, zhang2025phantom}, and for results related to nonlinear systems see
\cite{chen2020phase, chen2025singular}. For completeness, we also note that there have been earlier attempts in the literature on dynamical systems to define a phase concept for multiple-input-multiple-output systems, see, e.g.,
\cite{PostlethwaiteEdmundsMacFarlane1981, owens1984numerical, anderson1988hilbert, BarOnJonckheere1990, haddad1992there, chen1998multivariable}.
\end{remark}

In this paper, we consider the phase-analogue of \eqref{eq:real_stab_radius_problem}, namely where the largest singular value, $\sigma_1(\Delta)$, is substituted by the largest phase, $\phi_1(\Delta)$, and where $\Omega$ is taken to be the set of real sectorial matrices. In order for the phases of $M$ to be well-defined, $M$ must also be a sectorial matrix. We defer the exact mathematical formulation of the real angular stability radius to \eqref{eq:strict-radius} in \Cref{subsec:initial}, since some background, notation, and analysis is needed before it can be stated in a mathematically precise way.

Now, let $M$ be sectorial and therefore have well-defined phases. For the case where $\Omega$ is taken to be complex sectorial matrices, the answer to the phase analogue of \eqref{eq:real_stab_radius_problem} was given in \cite{WangChenKhongQiu2020}, and for convenience the result is restated as \Cref{thm:wang2020} in this paper. However, it turns out that for some matrices $M$, the complex and the real angular stability radius are the same (see \Cref{prop:complex_real_margin_same}); this demonstrates a major difference compared to the real stability radius. Nevertheless, when $M$ is a diagonal matrix, or when it is complex symmetric, 
the real angular stability radius and the complex angular stability radius are in general different.
Moreover, the case of complex symmetric matrices $M$ is of special interest in the literature on dynamical systems \cite{yang2025small} (see \Cref{rem:phases_mimo_systems} for context). Therefore, most of the analysis in this paper is concerned with the cases where $M$ is diagonal or complex symmetric.

The outline of the paper is as follows: in \Cref{sec:background} we give relevant background. In particular, we define the notion of sectorial matrices and phases of such matrices, and we present the exact mathematical formulation of the real angular stability radius problem that we consider. Moreover, we also show that there are sectorial matrices $M$ for which $\theta_\C(M) = \theta_\R(M)$ (see \Cref{prop:complex_real_margin_same}). In \Cref{sec:main_results}, which contains the main results of the paper, we restrict our attention to diagonal $M$ (\Cref{thm:main}), and $M$ that is complex symmetric (\Cref{cor:complex_symmetric}),
in which cases we in general have $\theta_\C(M) < \theta_\R(M)$. The proof of \Cref{thm:main} is presented in \Cref{sec:proof_main_result}. Some conclusions and outlooks are presented in \Cref{sec:conclusion}.
Finally, in \Cref{app:2times2}, the result of \Cref{thm:main} is established for the case of $2 \times 2$ matrices. We use a different technique in the proof compared to that for $n \times n$ matrices and present it for the interested reader.

\section{Background, problem formulation, and initial analysis}\label{sec:background}

In this section we present the relevant background material needed, and develop the exact mathematical formulation of the real angular stability radius problem considered in this paper. We also show that there are matrices for which the complex and the real angular stability radii are the same. First, we set up the notation, some of which has already been implicitly used in the introduction.

\paragraph{Notation}
Let $\mR$ and $\mC$ denote the field of real numbers and the field of complex numbers, respectively. Since we work with both fields, we use the notation  $\mF$ to denote either of the two fields.
To this end, for $A \in \mF^{n \times m}$ let $A^T$ denote its transpose and $A^*$ denote its complex conjugate transpose. The direct sum of two matrices is denoted by $\oplus$.
For a vector $x \in \mF^{n}$, $\|x\| := \sqrt{x^*x}$ denotes the Euclidean norm.
$A \in \mF^{n \times n}$ is called Hermitian if $A^* = A$, and skew-Hermitian if $A^* = -A$. A matrix $A \in \R^{n \times n}$ is called symmetric if $A^T = A$, and $A \in \C^{n \times n}$ is called complex symmetric if the same equality holds. Next, 
$\He(A) := \tfrac{1}{2} (A + A^*)$ denotes the Hermitian part of a matrix, and $\Sk(A) := \tfrac{1}{2} (A - A^*)$ denotes its skew-Hermitian part. Moreover, for $A \in \mF^{n \times n}$, let $\lambda(A)$ denote its spectrum.
Let $\gln(\mF)$ denote the set of invertible matrices of size $n \times n$, and let $\mU_n(\mF)$ denote the set of unitary matrices of size $n \times n$. Note that for $\mF = \mR$, the latter is the set of orthogonal matrices. For Hermitian matrices, we use $\succ 0$ ($\succeq 0$) to denote that they are positive (semi-) definite. A matrix $A$ such that $\He(A) \succ 0$ is called strictly accretive. Finally, two matrices $A,B \in \mF^{n \times n}$ are said to be congruent if there exists a matrix $C \in \gln(\mC)$ such that $A = C^*BC$.

\subsection{Phases of a sectorial matrix}
Given a matrix $A \in \mF^{n \times n}$, we denote its numerical range by
\[
W(A) := \big\{ z \in \mC \mid z = x^*Ax, \; x \in \mC^n \text{ and } \| x \|^2 = x^*x = 1 \big\},
\]
which by the Toeplitz–Hausdorff theorem is a compact convex set in $\C$ for all matrices $A$ \cite[Prop.~1.2.1 and 1.2.2]{horn1994topics}. We denote the angular field of values by
\[
W'(A) := \big\{ z \in \mC \mid z = x^*Ax, \; x \in \mC^n \setminus \{0\} \big\},
\]
which is the smallest convex cone that contains the numerical range.
$A \in \mF^{n \times n}$ is said to be sectorial if $0 \not \in W(A)$. Note that this can only hold for nonsingular matrices $A$ since otherwise $0 \in \lambda(A) \subset W(A)$; see \cite[Property~1.2.6]{horn1994topics}. For sectorial matrices, we can define the concept of phases via the following construction (see \cite{WangChenKhongQiu2020}):
if $A$ is sectorial, it is congruent to a unitary diagonal matrix $D$, i.e.,
\begin{equation}\label{eq:sect_decomp}
        A=T^*\diag(e^{\ii\phi_1(A)},\ldots,e^{\ii\phi_n(A)})T,
        \qquad T\in \gln(\mC),    
\end{equation}
where, after choosing an interval of length strictly less than \(\pi\), the phases
\(\phi_1(A)\ge\cdots\ge\phi_n(A)\) are uniquely determined
\cite{FurtadoJohnson2001, HornSergeichuk2006,Zhang2015, WangChenKhongQiu2020}. We use the notation $\phi(A) = (\phi_1(A), \ldots, \phi_n(A))$ to denote a vector of all phases. The decomposition in \eqref{eq:sect_decomp} is called a sectorial decomposition \cite{Zhang2015}.
In particular, for a sectorial matrix $A$ we have that the largest and smallest phases are given by \cite[Lemma~8]{HornSteinberg1959}
\begin{align*}
& \phi_1(A) := \max_{\substack{x \in \C^n \\ \| x \| = 1}} \; \angle x^*Ax, \\
& \phi_n(A) := \min_{\substack{x \in \C^n \\ \| x \| = 1}} \; \angle x^*Ax,
\end{align*}
respectively, where $\angle z$ denotes the phase of the complex number $z$. These are angles of the two supporting rays of the numerical range.
Using the smallest and largest phases, we define the set of phase-constrained matrices as
\[
\SF[\alpha, \beta] := \big\{ A \in \mF^{n \times n} \mid A \text{ sectorial}, \; \phi_1(A) \leq \alpha, \;  \phi_n(A) \geq \beta \big\}.
\]
For the special case $\beta = - \alpha$, we simply write $\SF[\alpha]$. Analogously, we write $\SF(\alpha, \beta)$ (and $\SF(\alpha)$) for the set of all sectorial matrices $A$ where $\phi_1(A) < \alpha$ and $\phi_n(A) > \beta$.

When it comes to angular stability radii, note that one could potentially rotate both clockwise and counter-clockwise in order to cause destabilization. Hence, both the largest and the smallest phases of a matrix might be of interest, and we therefore define the complex angular stability radius as
\begin{equation}\label{eq:complex-radius}
 \theta_{\C}(M):=
 \inf\{ \max\{|\phi_1(\Delta)|,|\phi_n(\Delta)|\}  \; : \; \Delta \in \C^{n \times n},\ \Delta \text{ is sectorial},\ \det(I+M\Delta)=0\}.
\end{equation}
The following theorem gives the solution to the problem of computing the complex angular stability radius.

\begin{theorem}[{\cite[Thm.~8.1]{WangChenKhongQiu2020}}]\label{thm:wang2020}
Let $M\in\C^{n\times n}$ be sectorial with phases in $(-\pi,\pi)$.  Then
\[
        \det(I+M\Delta)\ne0
        \quad\text{for all }\Delta\in\SC[\alpha,\beta]
\]
if and only if
\[
        \phi_1(M)<\pi-\alpha,
        \qquad
        \phi_n(M)>-\pi-\beta .
\]
\end{theorem}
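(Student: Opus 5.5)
We prove the two directions separately. Throughout we use the sectorial decomposition \eqref{eq:sect_decomp} and the fact that $\phi_1,\phi_n$ are the angles of the supporting rays of $W'(\cdot)$. We also use that $\det(I+M\Delta)=0$ if and only if $-1\in\lambda(M\Delta)$. Since $M$ is invertible, this holds if and only if $M^{-1}+\Delta$ is singular, i.e., if and only if there is $x\neq 0$ with $M^{-1}x=-\Delta x$.

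\textbf{Sufficiency.} Assume $\phi_1(M)<\pi-\alpha$ and $\phi_n(M)>-\pi-\beta$, and let $\Delta\in\SC[\alpha,\beta]$. The key tool is a phase bound for products: every eigenvalue $\lambda\neq0$ of $M\Delta$ satisfies
\[
\phi_n(M)+\phi_n(\Delta)\le\angle\lambda\le\phi_1(M)+\phi_1(\Delta),
\]
with the angle measured in the interval determined by the phase ranges. To prove it, suppose $M\Delta v=\lambda v$ with $v\neq 0$, and set $w=\Delta v$ (nonzero because $\Delta$ is invertible). Then $Mw=\lambda v$, so
\[
w^*Mw=\lambda\, w^*v=\lambda\,\overline{v^*\Delta v}.
\]
Hence $\angle\lambda=\angle(w^*Mw)+\angle(v^*\Delta v)$, where $v^*\Delta v$ here denotes the quadratic form of $\Delta$ at $v$ (its conjugate enters only through $w^*v=\overline{v^*w}$). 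Both angles lie in the respective phase intervals, which gives the bound; the bookkeeping of $2\pi$ ambiguities is fixed by the lengths of the intervals. Under the hypothesis, the sum of the phase intervals of $M$ and $\Delta$ lies inside $(-\pi,\pi)$, so $\pi$ is excluded. Hence $-1\notin\lambda(M\Delta)$ and $\det(I+M\Delta)\neq0$.

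\textbf{Necessity.} Suppose instead that $\phi_1(M)\ge\pi-\alpha$; the case $\phi_n(M)\le-\pi-\beta$ is symmetric. We construct $\Delta\in\SC[\alpha,\beta]$ with $I+M\Delta$ singular. Write $M=T^*DT$ and set $x=T^{-1}e_1$, so that $Mx=e^{\ii\phi_1}T^*e_1$ with $\phi_1=\phi_1(M)$. We seek $\Delta$ with $\Delta Mx=-x$, since then $Mx$ lies in the kernel of $I+\Delta M$, and $I+\Delta M$ singular is equivalent to $I+M\Delta$ singular. On the vector $u=Mx$ we need
\[
u^*\Delta u=-u^*x=-e^{-\ii\phi_1}\cdots,
\]
which is a scalar of phase $\pi-\phi_1\le\alpha$ (modulo sign conventions). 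To build $\Delta$, take the rank-one-type matrix $\Delta_0=e^{\ii(\pi-\phi_1)}\,P$ acting on $\operatorname{span}\{u\}$ and extend it. Concretely, choose a sectorial $\Delta$ whose restriction maps $u\mapsto -x$, and whose phases are $\pi-\phi_1$ together with values in $[\beta,\alpha]$ on a complementary subspace. The natural way to do this is to work in $T$-coordinates, where the condition reduces to a diagonal problem. If $\pi-\phi_1<\beta$, the required angle is too small; one then uses a $2\times2$ block combining the phases $\pi-\phi_1$ and $\beta$ so that the needed quadratic form phase is achieved while all phases stay in $[\beta,\alpha]$, by convexity of the angular field of values.

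\textbf{Main obstacle.} The main obstacle is this explicit extension in the necessity direction: making $\Delta$ map $Mx$ to $-x$ exactly while keeping $\phi_1(\Delta)\le\alpha$, $\phi_n(\Delta)\ge\beta$, and $\Delta$ sectorial. I would first try $\Delta=S^*\diag(e^{\ii\psi_1},\ldots)S$ with $S$ chosen so that $S Mx$ and $S^{-*}x$ are parallel to $e_1$. This requires $x^*Mx$ to have phase exactly $\phi_1$, which holds for our choice of $x$. That choice makes the condition a single scalar equation $e^{\ii\psi_1}=-e^{-\ii\phi_1}\cdot(\text{positive})$, which is solvable with $\psi_1=\pi-\phi_1\in[\beta,\alpha]$ after clipping via the boundary case.
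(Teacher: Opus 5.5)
\textbf{Sufficiency.} Your sufficiency direction is correct and self-contained. With $w=\Delta v\neq0$ you get $\lambda=w^*Mw/\overline{v^*\Delta v}$. So, modulo $2\pi$, the angle of $\lambda$ is a sum of an angle in $[\phi_n(M),\phi_1(M)]$ and an angle in $[\beta,\alpha]$. The hypothesis places that sum strictly inside $(-\pi,\pi)$, hence $\lambda\neq-1$. The paper only quotes this theorem, so there is no in-paper proof to compare with. Your identity gives the needed eigenvalue-angle bound for $M\Delta$ directly, without any majorization machinery.

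\textbf{Necessity: the gap.} The construction you reach in your last paragraph is the right one, but it is not finished. You leave $\psi_2,\dots,\psi_n$ unspecified and never verify $\psi_1=\pi-\phi_1(M)\ge\beta$. The fallback you propose for $\pi-\phi_1(M)<\beta$ (a $2\times2$ block plus ``convexity'') cannot work. If $\Delta y=-M^{-1}y$ with $y\neq0$, then $y^*\Delta y=y^*(-M^{-1})y$. So $W'(\Delta)$ must meet $W'(-M^{-1})$, which is the sector of angles $[\pi-\phi_1(M),\pi-\phi_n(M)]$. If this sector misses $[\beta,\alpha]$ modulo $2\pi$, no destabilizing $\Delta\in\SC[\alpha,\beta]$ exists.

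For instance, take $M=e^{\ii 5\pi/6}I$, $\alpha=\pi/2$, $\beta=\pi/3$. Then $\phi_1(M)\ge\pi-\alpha$, yet every eigenvalue of $M\Delta$ has angle in $[7\pi/6,4\pi/3]$, so $I+M\Delta$ is never singular. Thus the statement as quoted tacitly requires $\beta\le0\le\alpha$, i.e., the perturbation class should contain the positive definite matrices. Your proof has to invoke this normalization; no construction can avoid it.

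\textbf{The fix.} With $\beta\le0\le\alpha$, your problematic case never occurs. Set $\psi:=\pi-\phi_1(M)$; then $\psi\in(0,\alpha]\subseteq[\beta,\alpha]$, because $\phi_1(M)<\pi$. Take all phases equal to $\psi$, i.e., with $M=T^*DT$,
\[
\Delta:=e^{\ii\psi}(T^*T)^{-1}=T^{-1}(e^{\ii\psi}I)T^{-*}.
\]
This $\Delta$ is congruent to $e^{\ii\psi}I$, so it lies in $\SC[\alpha,\beta]$. Moreover,
\[
\det(I+M\Delta)=\det\bigl(T^*(I+e^{\ii\psi}D)T^{-*}\bigr)=\prod_{j=1}^n\bigl(1+e^{\ii(\phi_j(M)+\psi)}\bigr)=0,
\]
because the $j=1$ factor vanishes. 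The other case uses $\psi:=-\pi-\phi_n(M)\in[\beta,0)$, which needs $\alpha\ge0$.

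A minor slip: $\Delta Mx=-x$ puts $Mx$ in the kernel of $I+M\Delta$, and $x$ (not $Mx$) in the kernel of $I+\Delta M$.
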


\begin{remark}\label{rem:open_set_non-strict_inequ}
    The result in \Cref{thm:wang2020} is stated in terms of the closed set $\SC[\alpha,\beta]$ and strict inequalities for $\phi_1(M)$ and $\phi_n(M)$. By using \cite[Thm.~6.2]{WangChenKhongQiu2020}, this can be easily changed to the open set $\SC(\alpha,\beta)$ and non-strict inequalities for $\phi_1(M)$ and $\phi_n(M)$.
\end{remark}

\begin{remark}
    This subsection defines phases for sectorial matrices. It can be extended to define phases for so-called quasi- and semi-sectorial matrices based on the canonical congruence form of matrices \cite{HornSergeichuk2006}. Phases for such matrices are not used here, and for simplicity these definitions are thus omitted. Nevertheless, we refer the interested reader to \cite{WangMaoChenQiu2023} or \cite[Sec.~3]{zhang2025matrix}.
\end{remark}

\subsection{Problem formulation and initial analysis}\label{subsec:initial}

For the real angular stability radius, we are interested in perturbations in the set
$\SR[\alpha, \beta]$. However, since the numerical range of a real matrix is symmetric around the real axis \cite[p.~24]{horn1994topics}, for $\Delta \in \mR^{n \times n}$, $\Delta$ is sectorial if and only if either $\Delta \in \SR(\pi/2)$, or $\Delta \in \SR(3\pi/2, \pi/2)$. The former is equivalent to $\Delta$ being strictly accretive, i.e., to $\He(\Delta) \succ 0$, and the latter is equivalent to $\He(\Delta) \prec 0$. Without loss of generality, we therefore focus on the case $\He(\Delta) \succ 0$; analysis of the case $\He(\Delta) \prec 0$ can be done similarly and is omitted.
Next, note that for $n=1$, the problem is trivial: a real strictly accretive scalar is strictly positive, and without loss of generality we can write $M = e^{\ii\mu_1}$.  Now,
\[
        1+e^{\ii\mu_1}\Delta\ne0
\]
for every positive real $\Delta$, unless $\mu_1 = -\pi$ in which case $\Delta = 1$ is destabilizing. For the rest of this paper, we therefore focus on the case $n \geq 2$.

Finally, since by construction we have that $\theta_{\R}(M) \ge \theta_{\C}(M)$, and since the set of perturbations is $\SR(\pi/2)$, by \Cref{thm:wang2020} and \Cref{rem:open_set_non-strict_inequ} we have that $\theta_{\R}(M) = \infty$ (i.e., there is no strictly accretive real $\Delta$ such that $I + M\Delta$ loses rank) if $\phi_1(M) \leq \pi/2$ and $\phi_n(M) \geq -\pi/2$. Thus, the nontrivial cases are those in which a chosen sectorial phase branch crosses one of the two imaginary axes. In the main theorem, \Cref{thm:main} we treat the upper-dominant diagonal case, where the phases can be ordered as
\[
        \pi>\mu_1\ge\cdots\ge\mu_n>-\pi/2,
        \qquad \mu_1-\mu_n<\pi.
\]
This includes mixed-sign phase configurations. The lower-dominant case follows by conjugation and is stated separately in \Cref{cor:lower}.

Taking all of this together, this means that the real angular stability radius considered in this work can be defined as
\begin{equation}\label{eq:strict-radius}
 \theta_{\R}(M):=
 \inf\{\phi_1(\Delta) \; : \; \Delta\in\R^{n\times n},\ \He(\Delta) \succ 0,\ \det(I+M\Delta)=0\}.
\end{equation}

In this setup, it turns out that there exist matrices for which the complex and the real angular stability radii are the same.

\begin{proposition}[Complex and real angular stability radius can be the same]
\label{prop:complex_real_margin_same}
Let \(n\ge 2\), and let
\[
        \pi>\mu_1\ge\cdots\ge\mu_n>-\pi/2,
        \qquad \mu_1-\mu_n<\pi.
\]
If \(\mu_1>\pi/2\), then there exists a sectorial matrix
\(M_\mu \in \C^{n\times n}\) with phases $\phi(M_\mu)=(\mu_1,\ldots,\mu_n)$
such that
\[
        \theta_{\R}(M_\mu) = \pi-\mu_1 = \theta_{\C}(M_\mu).
\]
\end{proposition}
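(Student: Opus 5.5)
The plan is to prove the two inequalities $\theta_{\R}(M_\mu)\ge \pi-\mu_1$ and $\theta_{\R}(M_\mu)\le\pi-\mu_1$ separately. The first will follow from the complex radius, which is the same for every sectorial $M$ with these phases. For the second we build $M_\mu$ explicitly as a normal matrix, together with an explicit real destabilizing $\Delta$.

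\emph{Lower bound.} Since $\theta_{\R}(M)\ge\theta_{\C}(M)$ by construction, it suffices to show $\theta_{\C}(M)=\pi-\mu_1$ for any sectorial $M$ with $\phi(M)=(\mu_1,\dots,\mu_n)$. We apply \Cref{thm:wang2020} with the symmetric set $\SC[\alpha]$, i.e.\ $\beta=-\alpha$. It says that a destabilizing $\Delta\in\SC[\alpha]$ exists if and only if $\mu_1\ge \pi-\alpha$ or $\mu_n\le -\pi+\alpha$.

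We only need to consider $\alpha<\pi/2$, because $\pi-\mu_1<\pi/2$. For such $\alpha$ we have $-\pi+\alpha<-\pi/2<\mu_n$, so the second condition never holds. Hence destabilization is possible exactly when $\alpha\ge\pi-\mu_1$. Therefore the infimum in \eqref{eq:complex-radius} equals $\pi-\mu_1$, and it is attained since the set $\SC[\alpha]$ is closed.

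\emph{Upper bound (construction).} Set $\theta:=\pi-\mu_1\in(0,\pi/2)$ and
\[
\Delta := \begin{bmatrix}\cos\theta & -\sin\theta\\ \sin\theta & \cos\theta\end{bmatrix}\oplus I_{n-2}\in\R^{n\times n}.
\]
We check the required properties of $\Delta$ in turn.
\begin{itemize}[nosep]
\item $\He(\Delta)=\cos\theta\, I_2\oplus I_{n-2}\succ0$, since $\theta<\pi/2$.
\item $\Delta$ is normal with eigenvalues $e^{\pm\ii\theta}$ and $1$. Its unitary diagonalization is therefore a sectorial decomposition, so $\phi_1(\Delta)=\theta$.
\item The vector $v=\tfrac{1}{\sqrt2}(1,-\ii,0,\dots,0)^T$ satisfies $\Delta v=e^{\ii\theta}v$.
\end{itemize}

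Next, complete $v$ to a unitary matrix $U=[v,u_2,\dots,u_n]$ and define
\[
M_\mu:=U\,\diag(e^{\ii\mu_1},\dots,e^{\ii\mu_n})\,U^*.
\]
The matrix $M_\mu$ is normal, so $W(M_\mu)$ is the convex hull of its eigenvalues. These eigenvalues lie on an arc of length $\mu_1-\mu_n<\pi$, hence $0\notin W(M_\mu)$ and $M_\mu$ is sectorial. Moreover, the displayed formula is itself a sectorial decomposition \eqref{eq:sect_decomp} with $T=U^*$. By uniqueness of phases on the chosen branch, $\phi(M_\mu)=(\mu_1,\dots,\mu_n)$.

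Finally,
\[
M_\mu\Delta v=e^{\ii\theta}M_\mu v=e^{\ii(\theta+\mu_1)}v=e^{\ii\pi}v=-v,
\]
so $\det(I+M_\mu\Delta)=0$. This gives $\theta_{\R}(M_\mu)\le\theta=\pi-\mu_1$.

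The main point needing care is the phase bookkeeping. First, the phases of $M_\mu$ must come out on the same branch in $(-\pi,\pi)$ that is used in \Cref{thm:wang2020}. Second, the normal (unitary) diagonalization must be a legitimate sectorial decomposition, so that the phases equal the arguments of the eigenvalues. Both reduce to uniqueness of the sectorial decomposition once an interval of length less than $\pi$ is fixed, and that interval is provided by $\mu_1-\mu_n<\pi$.
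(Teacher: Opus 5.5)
Your proof is correct and is essentially the paper's argument. The paper takes $M_\mu=WD_\mu W^*$ with $W=U_2\oplus I_{n-2}$, whose first column is exactly your $v$, uses the same perturbation $Q_{\theta_0}\oplus I_{n-2}$ with $\theta_0=\pi-\mu_1$, and gets the lower bound from $\theta_{\R}\ge\theta_{\C}=\pi-\mu_1$ via \Cref{thm:wang2020}; the only cosmetic difference is that it checks singularity by computing $W^*\Delta W=\diag(e^{\ii\theta_0},e^{-\ii\theta_0},1,\ldots,1)$ and factoring the determinant, whereas you use the common eigenvector $v$ directly, which also allows an arbitrary unitary completion.
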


\begin{proof}
The fact that $\theta_{\C}(M_\mu) = \pi-\mu_1$ follows from \Cref{thm:wang2020}. The fact that there exists an $M$ such that $\theta_{\R}(M_\mu) = \pi-\mu_1$ is proved by an explicit construction of such an $M_\mu$. To this end, set
\[
        D_\mu:=\operatorname{diag}(e^{\ii\mu_1},\ldots,e^{\ii\mu_n})
\]
and define
\[
        U_2:=
        \frac{1}{\sqrt 2}
        \begin{bmatrix}
        1&1\\
        -\ii&\ii
        \end{bmatrix},
        \qquad \text{and} \qquad
        W:=U_2\oplus I_{n-2}.
\]
Let
\[
        M_\mu:=W D_\mu W^*
\]
and note that since \(M_\mu=(W^*)^*D_\mu W^*\) is a sectorial decomposition of \(M_\mu\), \(M_\mu\) is sectorial and \(\phi(M_{\mu}) = (\mu_1,\ldots,\mu_n) \).
Now, set
\[
        \theta_0:=\pi-\mu_1 .
\]
Because \(\mu_1>\pi/2\), we have \(0<\theta_0<\pi/2\). Define the real rotation
\[
        Q_{\theta_0}:=
        \begin{bmatrix}
        \cos\theta_0&-\sin\theta_0\\
        \sin\theta_0& \cos\theta_0
        \end{bmatrix},
        \qquad
        \Delta_{\theta_0}:=Q_{\theta_0}\oplus I_{n-2}.
\]
Then
\[
        \tfrac{1}{2}(\Delta_{\theta_0}+\Delta_{\theta_0}^T)
        =
        \cos\theta_0 I_2 \oplus I_{n-2} \succ 0,
\]
so \(\Delta_{\theta_0}\) is real and strictly accretive. Moreover, $\phi(\Delta_{\theta_0}) = (\theta_0, 0,\ldots,0, -\theta_0)$, and hence
\[
        \phi_1(\Delta_{\theta_0})=\theta_0=\pi-\mu_1.
\]
Next, a direct computation gives
\[
        U_2^*Q_{\theta_0}U_2
        =
        \operatorname{diag}(e^{\ii\theta_0},e^{-\ii\theta_0}),
\]
and therefore
\[
        W^*\Delta_{\theta_0}W
        =
        \operatorname{diag}(e^{\ii\theta_0},e^{-\ii\theta_0},1,\ldots,1).
\]
Consequently,
\[
\begin{aligned}
        \det(I+M_\mu\Delta_{\theta_0})
        &=
        \det\!\left(I+WD_\mu W^*\Delta_{\theta_0}\right) \\
        &= 
        \det\!\left(I+D_\mu W^*\Delta_{\theta_0}W\right)\\
        &=
        \left(1+e^{\ii(\mu_1+\theta_0)}\right)
        \left(1+e^{\ii(\mu_2-\theta_0)}\right)
        \prod_{j=3}^n(1+e^{\ii\mu_j}) .
\end{aligned}
\]
Since \(\mu_1+\theta_0=\pi\), the first factor is zero, and hence
$\det(I+M_\mu\Delta_{\theta_0})=0$.
\end{proof}

\section{Main results}\label{sec:main_results}

From \Cref{prop:complex_real_margin_same} we know that there are sectorial matrices for which the complex and the real angular stability radii are the same. However, this is not true for all sectorial matrices $M$. In particular, it is not true for the case when $M$ is sectorial and diagonal; that is the conclusion from \Cref{thm:main} and from \Cref{prop:for_diag_complex_real_not_same}. Neither is it true for sectorial and complex symmetric $M$ (see \Cref{cor:complex_symmetric}).

\begin{theorem}[Real angular stability radius for diagonal matrices]\label{thm:main}
Let $n \geq 2$ and let
\[
        M=\diag(e^{\ii\mu_1},\ldots,e^{\ii\mu_n}),
        \qquad
        \pi>\mu_1\ge \mu_2\ge\cdots\ge\mu_n>-\pi/2,
        \qquad
        \mu_1-\mu_n<\pi.
\]
It holds that:
\begin{enumerate}[label=(\roman*)]
\item If \(\mu_1+\mu_2\le\pi\), then there is no
\(\Delta\in\R^{n\times n}\) with \(\Delta+\Delta^T \succ 0\) and
\(\det(I+M\Delta)=0\).  Thus \(\theta_{\R}(M)=+\infty\).

\item If \(\mu_1+\mu_2>\pi\), then
\begin{equation}\label{eq:main-formula}
        \theta_{\R}(M)=
        \arccos\!\left(
        -\frac{\cos((\mu_1+\mu_2)/2)}
              {\cos((\mu_1-\mu_2)/2)}
        \right).
\end{equation}
\end{enumerate}
\end{theorem}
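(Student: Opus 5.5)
The plan is to reduce the $n\times n$ problem to a $2\times 2$ problem by compression, settle the $2\times2$ case by explicit computation, and prove attainment by embedding the $2\times2$ extremizer.

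\textbf{Step 1 (compression).} Since $M$ is unitary diagonal, $\det(I+M\Delta)=0$ holds iff there is $y\neq 0$ with $\Delta y=-M^*y$. Write $y=u+\ii v$ with $u,v\in\R^n$. Let $P\in\R^{n\times k}$, $k\in\{1,2\}$, have orthonormal columns spanning $\operatorname{span}_\R\{u,v\}$, so $y=Pz$ for some $z\in\C^k\setminus\{0\}$.

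Projecting the eigen-equation gives
\[
\widetilde\Delta z=-\widetilde N z,\qquad \widetilde\Delta:=P^T\Delta P,\quad \widetilde N:=P^TM^*P .
\]
Here $\widetilde\Delta$ is real and strictly accretive. Because $W(\widetilde\Delta)\subseteq W(\Delta)$, we get $\phi_1(\widetilde\Delta)\le\phi_1(\Delta)$, so compression never increases the cost. However, $\widetilde N$ is a compression of $M^*$, not the inverse of a compression of $M$, so the reduced condition is $\det(\widetilde\Delta+\widetilde N)=0$.

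The matrix $\widetilde N$ is complex symmetric, with $\Rea\widetilde N$ and $\Ima\widetilde N$ real-symmetric compressions of $\diag(\cos\mu_j)$ and $-\diag(\sin\mu_j)$. I will use a phase interlacing property for compressions of sectorial matrices (Furtado--Johnson type): the phases $\nu_1\ge\nu_2$ of $\widetilde N^{*}$ satisfy $\nu_1\le\mu_1$ and $\nu_2\le\mu_2$. The case $k=1$ is a scalar problem. It forces $\Delta$-direction phase $\pi-\nu$, which is impossible for real $y$ unless $\nu=\pi$, and that is excluded.

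\textbf{Step 2 (the $2\times2$ problem).} Next I solve $\det(\widetilde\Delta+\widetilde N)=0$ for a $2\times2$ complex symmetric sectorial $\widetilde N$ with phases $\nu_1\ge\nu_2$. I will first bring it to diagonal form. A complex symmetric $2\times2$ sectorial matrix admits a sectorial decomposition with a real orthogonal congruence after a real scaling; this is the point to verify, and otherwise I will work directly with $\Rea$ and $\Ima$ parts. In the diagonal case, write $\widetilde\Delta=\begin{bmatrix}a&b\\c&d\end{bmatrix}$. The condition becomes
\[
1+e^{\ii\nu_1}a+e^{\ii\nu_2}d+e^{\ii(\nu_1+\nu_2)}(ad-bc)=0 ,
\]
and the largest phase of a real accretive $2\times2$ matrix is given explicitly by
\[
\tan\phi_1(\widetilde\Delta)=\frac{|b-c|/2}{\sqrt{\det\He(\widetilde\Delta)}} .
\]
Separating real and imaginary parts and minimizing with Lagrange multipliers yields two conclusions. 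First, no accretive solution exists when $\nu_1+\nu_2\le\pi$. Second, when $\nu_1+\nu_2>\pi$ the minimum of $\phi_1$ equals
\[
f(\nu_1,\nu_2)=\arccos\!\bigl(-\cos\tfrac{\nu_1+\nu_2}{2}\big/\cos\tfrac{\nu_1-\nu_2}{2}\bigr).
\]
A convenient normalization is $\det\widetilde\Delta=1$ after scaling, which is harmless since phases are scale invariant.

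\textbf{Step 3 (monotonicity and attainment).} I will check that $f$ is decreasing in each of $\nu_1$ and $\nu_2$ on the relevant region, for example by computing its partial derivatives. Combined with the interlacing $\nu_i\le\mu_i$, this gives both (i) and the lower bound $\theta_\R(M)\ge f(\mu_1,\mu_2)$ in (ii). For the upper bound, I take the $2\times2$ optimizer $\Delta_2$ for $\diag(e^{\ii\mu_1},e^{\ii\mu_2})$ and set $\Delta=\Delta_2\oplus I_{n-2}$. This $\Delta$ is strictly accretive, satisfies $\phi_1(\Delta)=\phi_1(\Delta_2)$, and $\det(I+M\Delta)$ factors with a zero first factor. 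If the infimum is not attained in the $2\times2$ problem, I will replace $\Delta_2$ by a limiting sequence.

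\textbf{Main obstacle.} I expect the hardest step to be the compression inequality in Step 1. The needed statement is that the reduced $2\times2$ data are no ``more destabilizing'' than $(\mu_1,\mu_2)$, which requires a genuine phase-interlacing result for real compressions of complex symmetric matrices rather than just inclusion of numerical ranges. If clean interlacing fails, I would instead bound $f$ directly through the numerical range $W(\widetilde N)\subseteq W(M^*)=\operatorname{conv}\{e^{-\ii\mu_j}\}$. This approach exploits that for a real $z$-subspace the relevant quantity depends only on the two extreme points $e^{-\ii\mu_1}$ and $e^{-\ii\mu_2}$.
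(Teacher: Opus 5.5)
Your route is correct and genuinely different from the paper's, but one remark in Step~2 is wrong and must be dropped (second paragraph).

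\textbf{Comparison.} The paper never compresses $M$. It writes the witness-plane compression $A=X^T\Delta X=X^TY$ explicitly in the coordinates of $z$ (\Cref{lem:coordinate-identities}), bounds $\det\He(A)$ with the polygon inequality (\Cref{lem:polygon}), and then optimizes a scalar expression over the weights $p_j$ (\Cref{lem:rotated-estimate} and \Cref{lem:algebraic-angle}). Its only phase-theoretic input is $\phi_1(X^T\Delta X)\le\phi_1(\Delta)$. You compress $M$ as well, and your reduction closes exactly as you hope:
\begin{enumerate}[label=(\alph*)]
\item $P^TMP$ is complex symmetric and sectorial, so some $\He(e^{\ii\theta}P^TMP)$ is real symmetric positive definite. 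Hence the real and imaginary parts are diagonalized by one real congruence: $P^TMP=R^TDR$ with $R\in\mathbb{GL}_2(\R)$ and $D=\diag(e^{\ii\nu_1},e^{\ii\nu_2})$. The congruence is real invertible, not ``orthogonal after scaling''.
\item Then $\widetilde N=R^T\overline{D}R=R^TD^{-1}R$, so $\det(\widetilde\Delta+\widetilde N)=0$ is equivalent to $\det(I+D\Delta')=0$ with $\Delta'=R^{-T}\widetilde\Delta R^{-1}$. This $\Delta'$ is real, strictly accretive, and has the phases of $\widetilde\Delta$.
\item Your ``main obstacle'' is not one. The inequality $\phi_i(U^*AU)\le\phi_i(A)$ holds for every isometric compression of a sectorial matrix, by the max--min characterization $\phi_i(A)=\max_{\dim\mathcal V=i}\min_{0\ne x\in\mathcal V}\angle x^*Ax$. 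Realness of $P$ is needed only to keep $P^TMP$ complex symmetric. Interlacing also gives $\nu_i\in[\mu_n,\mu_1]$, so the $2\times2$ hypotheses hold.
\item Monotonicity holds as you expect. Put $\sigma=\frac{\nu_1+\nu_2}{2}$, $\tau=\frac{\nu_1-\nu_2}{2}$ and $g=-\cos\sigma/\cos\tau$. Then $\partial_{\nu_1}g=\sin\nu_2/(2\cos^2\tau)>0$ and $\partial_{\nu_2}g=\sin\nu_1/(2\cos^2\tau)>0$ on $\nu_1+\nu_2>\pi$.
\end{enumerate}
Your version explains modularly why only $\mu_1,\mu_2$ matter. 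It also applies to complex symmetric $M$ directly (with $M^{-1}$ in place of $M^*$), without \Cref{cor:congruence}. The paper's version needs neither full interlacing nor the real normal form, at the price of a less transparent scalar optimization.

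\textbf{The flaw.} Normalizing $\det\widetilde\Delta=1$ ``by scaling'' is not harmless, because $\det(I+tD\Delta')=0$ is not invariant under $t>0$. Keep $\delta:=\det\Delta'>0$ free; no Lagrange multipliers are needed. Write $\Delta'=\left[\begin{smallmatrix}a&b\\ c&d\end{smallmatrix}\right]$ and multiply your constraint by $e^{-\ii\sigma}$. This gives $(a+d)\cos\tau=-(1+\delta)\cos\sigma$ and $(a-d)\sin\tau=(1-\delta)\sin\sigma$. The first equation alone excludes $\nu_1+\nu_2\le\pi$. Otherwise, since $\delta=\det\He(\Delta')+\frac14(b-c)^2$, your phase formula gives $\cos^2\phi_1(\Delta')=\det\He(\Delta')/\delta\le ad/\delta$. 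Using $4ad=(a+d)^2-(a-d)^2$, for $\tau\ne0$,
\[
\frac{4ad}{\delta}=(\delta+\delta^{-1})(K-L)+2(K+L),\qquad K:=\frac{\cos^2\sigma}{\cos^2\tau},\quad L:=\frac{\sin^2\sigma}{\sin^2\tau},
\]
with $K-L=-\sin\nu_1\sin\nu_2/(\sin^2\tau\cos^2\tau)<0$. Since $\delta+\delta^{-1}\ge2$, this gives $ad/\delta\le K$, i.e.\ $\cos\phi_1(\Delta')\le-\cos\sigma/\cos\tau$. If $\tau=0$, the second equation forces $\delta=1$, and $4ad\le(a+d)^2$ gives the same bound. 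So the optimum does sit at $\delta=1$, but that is a conclusion, not a normalization.
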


\begin{proof}
See \Cref{sec:proof_main_result}.
\end{proof}

The result can, in a formulation that is more similar to the one in \Cref{thm:wang2020}, also be expressed as follows.
\begin{corollary}\label{cor:main-open-bound}
With the same setup as in \Cref{thm:main}, for \(0\le\alpha<\pi/2\),
\[
        \det(I+M\Delta)\ne0
        \quad\text{for all }\Delta\in\SR[\alpha]
\]
if and only if
\[
        \alpha<
        \begin{cases}
        \displaystyle
        \arccos\!\left(
        -\frac{\cos((\mu_1+\mu_2)/2)}
              {\cos((\mu_1-\mu_2)/2)}
        \right), & \mu_1+\mu_2>\pi,\\[2ex]
        \pi/2, & \mu_1+\mu_2\le\pi.
        \end{cases}
\]
\end{corollary}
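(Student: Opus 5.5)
This corollary should follow from \Cref{thm:main} once the infimum in \eqref{eq:strict-radius} is known to be attained, or at least once its threshold behaviour is pinned down. Throughout, write $\theta^\star$ for the right-hand side of the claimed bound: the arccos expression when $\mu_1+\mu_2>\pi$, and $\pi/2$ otherwise. The plan is to identify $\SR[\alpha]$, for $0\le\alpha<\pi/2$, with the set of real $\Delta$ satisfying $\He(\Delta)\succ0$ and $\phi_1(\Delta)\le\alpha$. For a real strictly accretive matrix the numerical range is symmetric about the real axis, so $\phi_n(\Delta)=-\phi_1(\Delta)$. Hence the constraint $\phi_n\ge-\alpha$ is automatic, and since $\alpha<\pi/2$ every real sectorial $\Delta$ with $\phi_1(\Delta)\le\alpha$ is strictly accretive. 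The negative-definite branch $\He(\Delta)\prec0$ has phases near $\pi$ and is excluded by $\alpha<\pi/2$. With this identification, the statement reads: no $\Delta\in\SR[\alpha]$ destabilizes if and only if $\alpha<\theta^\star$.

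\textbf{Case $\mu_1+\mu_2\le\pi$.} Part (i) of \Cref{thm:main} gives that no strictly accretive real $\Delta$ destabilizes at all. Since $\alpha<\pi/2$ always holds, both sides of the equivalence are true.

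\textbf{Case $\mu_1+\mu_2>\pi$.} First I check that the arccos argument lies in $(-1,0)$. With $a=(\mu_1+\mu_2)/2\in(\pi/2,\pi)$ and $b=(\mu_1-\mu_2)/2\in[0,\pi/2)$, we have $\cos a<0<\cos b$. Moreover $\cos a+\cos b=2\cos(\mu_1/2)\cos(\mu_2/2)>0$, because $\mu_1,\mu_2\in(-\pi,\pi)$. Hence $\theta^\star\in(0,\pi/2)$.

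\emph{Sufficiency.} If $\alpha<\theta^\star$, then every $\Delta\in\SR[\alpha]$ has $\phi_1(\Delta)\le\alpha<\theta_\R(M)$. Such a $\Delta$ cannot be destabilizing, by the definition of $\theta_\R(M)$ as an infimum.

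\emph{Necessity.} If $\alpha\ge\theta^\star$, I need a real strictly accretive destabilizing $\Delta$ with $\phi_1(\Delta)\le\alpha$. It suffices to produce one with $\phi_1(\Delta)=\theta^\star$ exactly, that is, to show the infimum in \Cref{thm:main}(ii) is attained. I would do this by an explicit construction supported on the first two coordinates: take $\Delta=\rho Q_{\theta}\oplus I_{n-2}$, or more generally a $2\times2$ real block $\Delta_2$ with equal phases $\pm\theta^\star$, padded with $I_{n-2}$. Because $M$ is diagonal, $\det(I+M\Delta)$ factors as $\det(I_2+\diag(e^{\ii\mu_1},e^{\ii\mu_2})\Delta_2)\prod_{j\ge3}(1+e^{\ii\mu_j})$. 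The product term is nonzero, so we must solve a $2\times2$ problem.

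The $2\times2$ problem is the main obstacle. For $\Delta_2=\begin{bmatrix}p&-q\\q&r\end{bmatrix}$ with $p,r>0$, the determinant equals
\[
1+e^{\ii\mu_1}p+e^{\ii\mu_2}r+e^{\ii(\mu_1+\mu_2)}(pr+q^2).
\]
Setting this to zero gives two real equations. The largest phase satisfies $\tan\phi_1(\Delta_2)=|q|/\sqrt{pr}$, so one minimizes $q^2/(pr)$ subject to those equations. I expect the minimizer to give $\cos\phi_1=-\cos a/\cos b$, matching \eqref{eq:main-formula}. Alternatively, if the proof of \Cref{thm:main} already produces an extremal perturbation, I would simply cite it. The remaining check is that the minimizer is attained at a point with $p,r>0$, rather than only approached as $p$ or $r$ tends to $0$ or $\infty$. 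I expect this to hold because $\theta^\star<\pi/2$ strictly, which keeps the minimizer away from the boundary. Given attainment, choosing $\alpha\ge\theta^\star$ puts this $\Delta$ in $\SR[\alpha]$ and completes the equivalence.
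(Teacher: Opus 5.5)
Your identification of $\SR[\alpha]$ (for $0\le\alpha<\pi/2$) with the real matrices satisfying $\He(\Delta)\succ0$ and $\phi_1(\Delta)\le\alpha$ is correct. So are the treatment of the case $\mu_1+\mu_2\le\pi$, the check that $\theta^\star\in(0,\pi/2)$, and the sufficiency direction. This is how the paper intends the corollary to be read off from \Cref{thm:main}, which it states without a separate proof.

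The gap is in necessity at the boundary value $\alpha=\theta^\star$. For $\alpha>\theta^\star$, the definition of the infimum already supplies a destabilizing $\Delta$ with $\phi_1(\Delta)<\alpha$. At $\alpha=\theta^\star$, however, you need the infimum to be attained, and you do not establish this. The constrained minimization of $q^2/(pr)$ is left unsolved, and its value is only expected to match \eqref{eq:main-formula}. The claim that the minimizer stays in $p,r>0$ because $\theta^\star<\pi/2$ is a heuristic, not an argument.

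No minimization is needed. The lower bound $\theta_\R(M)\ge\theta^\star$ is already part of \Cref{thm:main}, so you only have to exhibit one feasible point. Your first candidate $\rho Q_\theta\oplus I_{n-2}$ works with $\rho=1$ and $\theta=\theta^\star$, i.e.\ $p=r=\cos\theta^\star$ and $q=\sin\theta^\star$. Then $pr+q^2=1$. With $a,b$ as in your range check, $e^{\ii\mu_1}+e^{\ii\mu_2}=2e^{\ii a}\cos b$ and $\cos\theta^\star=-\cos a/\cos b$, so the $2\times2$ determinant is
\[
1+2e^{\ii a}\cos b\,\cos\theta^\star+e^{2\ii a}=1-2e^{\ii a}\cos a+e^{2\ii a}=1-e^{\ii a}\bigl(e^{\ii a}+e^{-\ii a}\bigr)+e^{2\ii a}=0 .
\]
This perturbation has symmetric part $\cos\theta^\star I_2\oplus I_{n-2}\succ0$ and phases $\theta^\star,0,\dots,0,-\theta^\star$. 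Hence it lies in $\SR[\alpha]$ for every $\alpha\ge\theta^\star$, and it destabilizes $M$.

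This is exactly the perturbation $\Delta_\Theta$ constructed at the end of the proof of \Cref{thm:main}. Your fallback of citing it is therefore the right move, and with this one-line verification the argument is complete.
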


\begin{remark}[Boundary case]\label{rem:boundary}
If \(\mu_1+\mu_2=\pi\), the formula in \eqref{eq:main-formula} gives the limiting value \(\pi/2\). However, the largest phase of $\Delta$ cannot be $\pi/2$ under the strict accretivity constraint $\He(\Delta)\succ0$.
\end{remark}

However, from \Cref{prop:complex_real_margin_same} we know that if we do not restrict ourselves to diagonal $M$, then there are matrices for which the complex and the real angular stability radius are the same. Nevertheless, for diagonal matrices, the complex angular stability radius is, in general, strictly smaller than the real angular stability radius. This can be made precise as follows.

\begin{proposition}[Different complex and real radii for diagonal matrices]
\label{prop:for_diag_complex_real_not_same}
Let \(n\ge 2\), let
\[
        \pi>\mu_1 \ge \mu_2 \ge \cdots \ge \mu_n>-\pi/2,
        \qquad
        \mu_1-\mu_n<\pi,
\]
and let $M = \diag(e^{\ii\mu_1},\ldots,e^{\ii\mu_n})$.
If \(\mu_1>\pi/2\), then
\[
        \theta_{\C}(M)=\pi-\mu_1 \leq \theta_{\R}(M),
\]
 where $\theta_{\R}(M)$ is given by \eqref{eq:main-formula} if $\mu_1 + \mu_2  > \pi$, and where $\theta_{\R}(M) = \infty$ otherwise. Moreover, the inequality is strict if $\mu_1 > \mu_2$.
\end{proposition}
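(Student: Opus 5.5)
The plan has three parts: obtain $\theta_{\C}(M)$ from \Cref{thm:wang2020}, obtain the inequality $\theta_{\C}(M)\le\theta_{\R}(M)$ by inclusion, and prove strictness by comparing the closed-form value in \Cref{thm:main} with $\pi-\mu_1$.

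\textbf{The value of $\theta_{\C}(M)$.} Since $M$ is diagonal with unit-modulus entries, $M$ is its own sectorial decomposition (take $T=I$), so $\phi(M)=(\mu_1,\ldots,\mu_n)$. Apply \Cref{thm:wang2020} together with \Cref{rem:open_set_non-strict_inequ} to the symmetric sets $\SC(\alpha)$, i.e., $\beta=-\alpha$. Invertibility of $I+M\Delta$ for all such $\Delta$ then holds if and only if
\[
\mu_1\le\pi-\alpha \qquad\text{and}\qquad \mu_n\ge-\pi+\alpha .
\]
Because $\mu_n>-\pi/2$ and $\mu_1>\pi/2$, the first condition is the binding one. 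Hence the infimum in \eqref{eq:complex-radius} equals $\pi-\mu_1$. The case where the maximum in \eqref{eq:complex-radius} is attained by $|\phi_n(\Delta)|$ is covered by the symmetric formulation. For an explicit witness, $\Delta=e^{\ii(\pi-\mu_1)}\oplus I_{n-1}$ gives $\det(I+M\Delta)=0$ with largest phase $\pi-\mu_1$.

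\textbf{The inequality.} A real $\Delta$ with $\He(\Delta)\succ0$ is sectorial with $\phi_n(\Delta)=-\phi_1(\Delta)$, because the numerical range of a real matrix is symmetric about the real axis. So the feasible set in \eqref{eq:strict-radius} is contained in the feasible set in \eqref{eq:complex-radius}, with the same objective value, and $\theta_{\C}(M)\le\theta_{\R}(M)$ follows. The value of $\theta_{\R}(M)$ is exactly \Cref{thm:main}: it is given by \eqref{eq:main-formula} when $\mu_1+\mu_2>\pi$, and equals $+\infty$ otherwise.

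\textbf{Strictness when $\mu_1>\mu_2$.} If $\mu_1+\mu_2\le\pi$, then $\theta_{\R}(M)=\infty>\pi-\mu_1$, so there is nothing to prove. Otherwise set $s=(\mu_1+\mu_2)/2\in(\pi/2,\pi)$ and $d=(\mu_1-\mu_2)/2\in(0,\pi/2)$; here $d<\pi/2$ because $\mu_1-\mu_2\le\mu_1-\mu_n<\pi$. Since $\arccos$ is strictly decreasing and $\pi-\mu_1\in(0,\pi/2)$, the claim
\[
\arccos\!\left(-\frac{\cos s}{\cos d}\right)>\pi-\mu_1
\]
is equivalent to
\[
-\frac{\cos s}{\cos d}<\cos(\pi-\mu_1)=-\cos(s+d),
\]
that is, to $\cos s>\cos d\,\cos(s+d)$. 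Expanding,
\[
\cos s-\cos d\cos(s+d)=\cos s\,(1-\cos^2 d)+\sin s\,\sin d\,\cos d=\sin d\,\bigl(\cos s\,\sin d+\sin s\,\cos d\bigr)=\sin d\,\sin(s+d)=\sin d\,\sin\mu_1 .
\]
This is strictly positive, because $d\in(0,\pi/2)$ and $\mu_1\in(\pi/2,\pi)$. When $\mu_1=\mu_2$ the difference vanishes, so equality holds there, which is consistent with the statement.

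The only delicate point is the first step: matching the minimax objective in \eqref{eq:complex-radius} with the closed-set/open-set form of \Cref{thm:wang2020}. The trigonometric comparison itself is routine.
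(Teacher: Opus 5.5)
Your proposal is correct and follows the paper's proof essentially step for step: $\theta_{\C}(M)=\pi-\mu_1$ comes from \Cref{thm:wang2020}, the inequality comes from the inclusion of the real feasible set, and strictness comes from your identity $\cos s-\cos d\cos(s+d)=\sin d\,\sin\mu_1$, which is the paper's cosine comparison $\cos\theta_{\C}-\cos\theta_{\R}=\sin d\,\sin\mu_1/\cos d$ multiplied through by $\cos d>0$. Your explicit complex witness $e^{\ii(\pi-\mu_1)}\oplus I_{n-1}$ is a small addition that the paper leaves implicit in its citation of \Cref{thm:wang2020}.
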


\begin{proof}
Since $\mu_n>-\pi/2$ and $\mu_1>\pi/2$, \Cref{thm:wang2020} gives $\theta_{\C}(M)=\pi-\mu_1$. Let
\[
        \theta_\C:=\pi-\mu_1 \quad \text{ and } \quad \theta_\R=
        \arccos\!\left(
        -\frac{\cos((\mu_1+\mu_2)/2)}
              {\cos((\mu_1-\mu_2)/2)}
        \right).
\]
The only thing that does not directly follow from
\Cref{thm:wang2020} and \Cref{thm:main} is the strict inequality $\theta_{\C} < \theta_{\R}$ in the case $\mu_1 + \mu_2 > \pi$.
To show this, note that then $\mu_2>0$ and both angles lie in
\((0,\pi)\). Therefore, it suffices to compare cosines:
\[
\begin{aligned}
        \cos\theta_\C-\cos\theta_\R
        &=-\cos\mu_1
          +\frac{\cos((\mu_1+\mu_2)/2)}
                 {\cos((\mu_1-\mu_2)/2)} \\
        &=\frac{\sin((\mu_1-\mu_2)/2)\sin\mu_1}
                {\cos((\mu_1-\mu_2)/2)}
        >0
\end{aligned}
\]
whenever $\mu_1>\mu_2$. Thus, $\theta_{\C}(M)=\pi-\mu_1 < \theta_{\R}(M)$ whenever $\mu_1 + \mu_2 > \pi$.
\end{proof}

Finally, from \Cref{thm:main}, we can easily derive the following corollaries.

\begin{corollary}[Real congruence invariance]\label{cor:congruence}
Let \(M\in\C^{n\times n}\) be sectorial, let \(R\in \gln(\mR)\), and put \(\widetilde M=R^TMR\). Then
\(\widetilde M\) and \(M\) have the same real angular stability radius.
\end{corollary}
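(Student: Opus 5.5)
The plan is to build a bijection between the destabilizing real strictly accretive perturbations of $M$ and those of $\widetilde M$, and to show that this bijection preserves the largest phase. Both radii are infima of $\phi_1(\Delta)$ over sets of this form, so they must then coincide. The candidate map is $\Delta \mapsto \widetilde\Delta := R^{-1}\Delta R^{-T}$, with inverse $\widetilde\Delta \mapsto R\widetilde\Delta R^T$. It stays inside the class of real matrices because $R$ is real and invertible.

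\textbf{Step 1 (determinant).} First I would check that destabilization is preserved. Using $\det(I+AB)=\det(I+BA)$,
\[
\det(I+\widetilde M\widetilde\Delta)=\det(I+R^TMRR^{-1}\Delta R^{-T})=\det(I+R^TM\Delta R^{-T})=\det(I+M\Delta).
\]
Hence $\det(I+M\Delta)=0$ if and only if $\det(I+\widetilde M\widetilde\Delta)=0$.

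\textbf{Step 2 (accretivity).} Next, strict accretivity should be preserved. Since $R$ is real, $\widetilde\Delta+\widetilde\Delta^T=R^{-1}(\Delta+\Delta^T)R^{-T}$. This is a congruence by an invertible matrix, so it preserves positive definiteness in both directions.

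\textbf{Step 3 (phases).} This is the only step needing an argument beyond algebra, so I expect it to be the main point of care. The claim is that $\widetilde\Delta=C^*\Delta C$ with $C=R^{-T}\in\gln(\mR)\subset\gln(\mC)$ has the same phases as $\Delta$. The angular field of values is invariant under congruence: $x^*\widetilde\Delta x=(Cx)^*\Delta(Cx)$, and $x\mapsto Cx$ is a bijection of $\C^n\setminus\{0\}$, so $W'(\widetilde\Delta)=W'(\Delta)$. The phases $\phi_1$ and $\phi_n$ are the angles of the supporting rays of this cone (the max and min formulas of \cite{HornSteinberg1959} recalled above; normalization of $x$ does not affect the angle). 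Therefore $\phi_1(\widetilde\Delta)=\phi_1(\Delta)$, and sectoriality is preserved. Alternatively, one can compose the sectorial decomposition \eqref{eq:sect_decomp} of $\Delta$ with $C$, which gives a sectorial decomposition of $\widetilde\Delta$ with the same diagonal unitary factor; uniqueness of the phases then gives the full equality $\phi(\widetilde\Delta)=\phi(\Delta)$.

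The same congruence argument shows that $\widetilde M$ is sectorial with $\phi(\widetilde M)=\phi(M)$, so $\theta_\R(\widetilde M)$ is well defined. Combining Steps 1--3, the feasible sets in \eqref{eq:strict-radius} for $M$ and $\widetilde M$ correspond bijectively, and the objective value is preserved. The infima are therefore equal, including the case where both are $+\infty$ because both feasible sets are empty.

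Finally, I would note that the bijection also respects the case $\He(\Delta)\prec0$ that was set aside in the paper, so the conclusion holds for the full sectorial real class as well.
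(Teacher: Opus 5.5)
Your proposal is correct and follows essentially the same route as the paper. The paper uses the map $\Delta\mapsto R\Delta R^T$, which is the inverse of yours, together with the identity $\det(I+R^TMR\Delta)=\det(I+M(R\Delta R^T))$ and congruence invariance of phases, to conclude that the destabilizing sets correspond exactly with equal $\phi_1$.
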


\begin{proof}
Since $R$ is invertible and the determinant is a multiplicative map, we have that
\[
        \det(I+R^TMR\Delta)=\det(R^{T})\det(I+MR\Delta R^T)\det(R^{-T}) = \det(I+M(R\Delta R^T)).
\]
The map \(\Delta\mapsto R\Delta R^T\) is a bijection on real matrices with positive definite symmetric part. Moreover, matrix phases are invariant under congruence, so
\(\phi_1(R\Delta R^T)=\phi_1(\Delta)\).  The destabilizing sets therefore correspond exactly.
\end{proof}

\begin{corollary}[Real angular stability radius for complex symmetric matrices]\label{cor:complex_symmetric}
Let \(n\ge 2\) and $M \in \C^{n \times n}$ be complex symmetric and sectorial with phases $\phi(M)=(\mu_1,\ldots,\mu_n)$, where
\[
        \pi>\mu_1 \ge \mu_2 \ge \cdots \ge \mu_n>-\pi/2,
        \qquad
        \mu_1-\mu_n<\pi.
\]
Then the conclusion is the same as in \Cref{thm:main}.
\end{corollary}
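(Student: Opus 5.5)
The plan is to reduce \Cref{cor:complex_symmetric} to \Cref{thm:main} via \Cref{cor:congruence}. The key fact needed is that a complex symmetric sectorial matrix admits a sectorial decomposition with a \emph{real} congruence. Concretely, I aim to show that if $M = M^T$ is sectorial, then there exists $R \in \gln(\mR)$ such that
\[
M = R^T \diag(e^{\ii\mu_1},\ldots,e^{\ii\mu_n}) R .
\]
Given this, \Cref{cor:congruence} says that $M$ and $D_\mu := \diag(e^{\ii\mu_1},\ldots,e^{\ii\mu_n})$ have the same real angular stability radius. Then \Cref{thm:main} applied to $D_\mu$ gives exactly the stated dichotomy. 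More precisely, the set of destabilizing real strictly accretive $\Delta$ for $M$ is the image under $\Delta \mapsto R\Delta R^T$ of that set for $D_\mu$, so the nonexistence statement in part (i) transfers as well.

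To build $R$, write $M = X + \ii Y$ with $X, Y \in \R^{n\times n}$. Since $M$ is complex symmetric, $X$ and $Y$ are real symmetric. Sectoriality gives $0 \notin W(M)$, so there is a rotation $e^{\ii\gamma}$ for which $\He(e^{\ii\gamma}M) \succ 0$. The matrix $e^{\ii\gamma}M = X' + \ii Y'$ is again complex symmetric, with $X', Y'$ real symmetric. For real $x$ we have $x^T M x = x^* M x$, and for complex $x$ we have $\He(e^{\ii\gamma}M) = X'$ because $X'$ and $Y'$ are real symmetric; hence $X' \succ 0$.

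We then simultaneously diagonalize the real symmetric pair $(X', Y')$ by a real congruence. Set $S = X'^{1/2}$, which is real, and orthogonally diagonalize the real symmetric matrix $S^{-1}Y'S^{-1} = Q\Lambda Q^T$. This gives
\[
e^{\ii\gamma}M = (Q^T S)^T (I + \ii\Lambda)(Q^T S).
\]
Each diagonal entry satisfies $1 + \ii\lambda_k = \sqrt{1+\lambda_k^2}\, e^{\ii\arctan\lambda_k}$. Absorbing the positive real scalars $(1+\lambda_k^2)^{1/4}$ into a real diagonal factor produces a real $R$ with $M = R^T \diag(e^{\ii(\arctan\lambda_k - \gamma)}) R$.

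This is a sectorial decomposition. By the uniqueness of phases within a window of length less than $\pi$, the angles $\arctan\lambda_k - \gamma$, after reordering by a permutation (which is also real), coincide with $\mu_1, \ldots, \mu_n$ in the branch fixed by the hypotheses.

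The main point requiring care is this branch bookkeeping: the angles $\arctan\lambda_k - \gamma$ lie in an interval of length less than $\pi$, and that interval must be identified with the interval containing $\mu_1,\ldots,\mu_n$. Both sets represent the phases of $M$ within a window of width less than $\pi$, and we choose $\gamma$ consistently with the branch in which $\pi > \mu_1$ and $\mu_n > -\pi/2$. Uniqueness of the sectorial phases modulo a common $2\pi$ shift then makes them identical. With this settled, the remaining steps are direct applications of the earlier results.
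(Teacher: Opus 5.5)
Your proposal is correct and follows the same route as the paper: write $M=R^TD_\mu R$ with $R$ real and invertible, then apply \Cref{cor:congruence} and \Cref{thm:main}. The only difference is that the paper cites \cite[Thm.~1]{yang2025small} for the existence of a real $R$, whereas you prove it directly by rotating so that $X'=\He(e^{\ii\gamma}M)\succ0$ and then simultaneously diagonalizing the real symmetric pair $(X',Y')$ by a real congruence, which is a valid self-contained argument.

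The branch bookkeeping you flag is not actually needed. $D_\mu$ depends only on the $e^{\ii\mu_j}$, so uniqueness of the unitary diagonal factor up to a permutation suffices, and that permutation is real and can be absorbed into $R$. There is also a harmless slip: the destabilizing set for $M$ is the image of that for $D_\mu$ under $\Delta\mapsto R^{-1}\Delta R^{-T}$, not under $\Delta\mapsto R\Delta R^{T}$.
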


\begin{proof}
A matrix $M \in \C^{n \times n}$ is complex symmetric and sectorial if and only if the congruence transformation in a sectorial decomposition of $M$ is by real matrices, i.e., for a sectorial matrix $M$ with sectorial decomposition $M = T^* \diag(e^{\ii \phi_1(M)}, \ldots, e^{\ii \phi_n(M)}) T$, $M$ is complex symmetric if and only if $T \in \gln(\mR)$ \cite[Thm.~1]{yang2025small}. The result now follows from \Cref{cor:congruence} and \Cref{thm:main}.
\end{proof}

\begin{corollary}[Lower-dominant version]\label{cor:lower}
Let $M=\diag(e^{\ii\mu_1},\ldots,e^{\ii\mu_n})$, where
\[
        \pi/2>\mu_1\ge\cdots\ge\mu_n>-\pi,
        \qquad
        \mu_1-\mu_n<\pi.
\]
It holds that:
\begin{enumerate}[label=(\roman*)]
\item If \(\mu_{n-1}+\mu_n\ge -\pi\), then there is no
\(\Delta\in\R^{n\times n}\) with \(\Delta+\Delta^T \succ 0\) and
\(\det(I+M\Delta)=0\).  Thus \(\theta_{\R}(M)=+\infty\).

\item If \(\mu_{n-1}+\mu_n< -\pi\), then
\[
        \theta_{\R}(M)=
        \arccos\!\left(
        -\frac{\cos((\mu_{n-1}+\mu_n)/2)}
              {\cos((\mu_{n-1}-\mu_n)/2)}
        \right).
\]
\end{enumerate}
\end{corollary}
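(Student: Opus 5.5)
The natural route is to reduce \Cref{cor:lower} to \Cref{thm:main} by complex conjugation and a reversal of the coordinate order.

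Set \(\nu_j:=-\mu_{n+1-j}\) and let \(P\) be the real permutation matrix that reverses the order of coordinates. The hypotheses \(\pi/2>\mu_1\ge\cdots\ge\mu_n>-\pi\) and \(\mu_1-\mu_n<\pi\) become
\[
\pi>\nu_1\ge\cdots\ge\nu_n>-\pi/2,\qquad \nu_1-\nu_n<\pi .
\]
Moreover \(\nu_1+\nu_2=-(\mu_n+\mu_{n-1})\) and \(\nu_1-\nu_2=\mu_{n-1}-\mu_n\). Consequently \(\mu_{n-1}+\mu_n\ge-\pi\) is equivalent to \(\nu_1+\nu_2\le\pi\). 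Also, cosine is even, so
\[
-\frac{\cos((\nu_1+\nu_2)/2)}{\cos((\nu_1-\nu_2)/2)}=-\frac{\cos((\mu_{n-1}+\mu_n)/2)}{\cos((\mu_{n-1}-\mu_n)/2)}.
\]
It therefore suffices to show \(\theta_\R(M)=\theta_\R(N)\), where \(N:=\diag(e^{\ii\nu_1},\ldots,e^{\ii\nu_n})\).

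To prove this, note first that \(\overline{M}=P^TNP\), with \(P\) real and orthogonal. By \Cref{cor:congruence}, \(\overline M\) and \(N\) therefore have the same real angular stability radius. It remains to compare \(M\) with \(\overline M\). For real \(\Delta\) we have \(\overline{\det(I+M\Delta)}=\det(I+\overline M\Delta)\), so the two sets of destabilizing real strictly accretive perturbations coincide, and the infima of \(\phi_1(\Delta)\) over them agree.

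The only point needing care is that the radius in \eqref{eq:strict-radius} uses \(\phi_1(\Delta)\) and not \(\max\{|\phi_1|,|\phi_n|\}\). I will dispose of this by noting that for real strictly accretive \(\Delta\) the numerical range is symmetric about the real axis, so \(\phi_n(\Delta)=-\phi_1(\Delta)\). Hence \(\phi_1(\Delta)\) is the same quantity in both formulations and unaffected by conjugation, and the equality \(\theta_\R(M)=\theta_\R(N)\) holds. Both parts (i) and (ii) then follow directly from \Cref{thm:main} applied to \(N\). I expect no real obstacle; the main care is in checking that the index shift \((\mu_{n-1},\mu_n)\mapsto(\nu_2,\nu_1)\) and the strict versus non-strict inequalities translate correctly.
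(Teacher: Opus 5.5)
Your proposal is correct and follows the paper's proof: conjugate $M$, which leaves both the real strictly accretive perturbation class and $\phi_1(\Delta)$ untouched, and then apply \Cref{thm:main} to $\overline M$. Your reversal permutation $P$ combined with \Cref{cor:congruence} makes rigorous the reordering of the diagonal of $\overline M$, a step the paper leaves implicit. Your remark that $\phi_n(\Delta)=-\phi_1(\Delta)$ is harmless but not needed, since conjugation never acts on $\Delta$.
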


\begin{proof}
For real \(\Delta\),
\[
        \det(I+M\Delta)=0
        \quad\Longleftrightarrow\quad
        \det(I+\overline M\Delta)=0.
\]
The perturbation class and \(\phi_1(\Delta)\) are unchanged. The phases of $\overline M$ can be ordered as
\[
        -\mu_n\ge -\mu_{n-1}\ge\cdots\ge -\mu_1,
\]
and these phases satisfy the assumptions of \Cref{thm:main}. The result therefore follows from \Cref{thm:main} applied to $\overline M$.
\end{proof}

\section{Proof of Theorem~\ref{thm:main}}\label{sec:proof_main_result}
In this section, the proof of \Cref{thm:main} is presented. The main idea  of the proof is as follows: if $\det(I+M\Delta)=0$, we can consider a nonzero eigenvector $z$ corresponding to the zero eigenvalue. With this eigenvector, we use $\Rea z$ and $\Ima z$ to span a plane in $\R^n$, and then compress $\Delta$ to this plane. This gives a $2 \times 2$ accretive matrix whose phases can be computed, and from which we obtain a lower bound on the phases needed for destabilization. Moreover, we also get a necessary condition for when this can hold, i.e., a witness; we therefore call the plane spanned by $\Rea z$ and $\Ima z$ the witness plane. We then show that this lower bound is related to the largest and second largest phases of $M$, and finally show that the bound is in fact tight by constructing a destabilizing $\Delta$ that attains it.

Throughout this section, we define
\[
        J:=\begin{bmatrix}0&-1\\ 1&0\end{bmatrix}.
\]

\begin{remark}
The underlying idea of the proof is a phase-theoretic adaptation of the real-vector lifting used in the proof of the real stability radius \cite{QiuBernhardssonRantzerDavisonYoungDoyle1995}. The result in \cite{QiuBernhardssonRantzerDavisonYoungDoyle1995} is proved by the same split described in preceding paragraph, into real and imaginary part, and the worst-case real perturbation then constructed turns out to be of rank at most two. The proof presented here uses a similar mechanism, but replaces the singular-value analysis by a compression argument for matrix phases.
\end{remark}

\subsection{Auxiliary results}\label{sec:auxiliary-identities}

This subsection contains a number of auxiliary results that we need in the proof.

\begin{lemma}[Phase of a real \(2\times2\) strictly accretive matrix]\label{lem:2by2-phase}
Let \(B\in\R^{2\times2}\) and suppose \(H:=\He(B) \succ 0\).  Write
\[
        \Sk(B)=sJ,\qquad s\in\R.
\]
Then the two phases of \(B\) are
\[
        \pm\arctan \frac{|s|}{\sqrt{\det H}}.
\]
Equivalently,
\begin{equation}\label{eq:2by2-phase-formula}
        \tan\phi_1(B)=\frac{|s|}{\sqrt{\det H}} \quad \text{and} \quad \tan\phi_2(B)=-\frac{|s|}{\sqrt{\det H}}.
\end{equation}
\end{lemma}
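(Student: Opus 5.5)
We reduce to a normal form by a real congruence and then read the phases off directly. Since $H \succ 0$ is real symmetric, put $R := H^{-1/2} \in \gln(\R)$ (real and symmetric). Set $\widetilde B := R^T B R = R B R$. Its Hermitian part is $R H R = I_2$. Its skew part is $s\,R J R$. For any $2\times 2$ matrix $R$ we have $R^T J R = \det(R)\, J$; this follows from a direct check on the entries. Here $\det R = (\det H)^{-1/2}$, so
\[
        \widetilde B = I_2 + \frac{s}{\sqrt{\det H}}\, J .
\]
Phases are invariant under congruence, since a sectorial decomposition of $B$ transforms into one of $\widetilde B$ with $T$ replaced by $TR$. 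Hence it suffices to compute the phases of $\widetilde B = I + tJ$, where $t := s/\sqrt{\det H}$.

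The matrix $I + tJ = \begin{bmatrix} 1 & -t \\ t & 1\end{bmatrix}$ is normal. Conjugating by the unitary $U_2$ from the proof of \Cref{prop:complex_real_margin_same} gives
\[
        U_2^*(I+tJ)U_2 = \diag(1+\ii t,\, 1-\ii t).
\]
This is the same computation as $U_2^*Q_\theta U_2 = \diag(e^{\ii\theta},e^{-\ii\theta})$ used there. Next write $1 \pm \ii t = \sqrt{1+t^2}\, e^{\pm \ii \arctan t}$ and absorb the positive scalar $(1+t^2)^{1/4}$ into the congruence matrix. This yields a sectorial decomposition
\[
        \widetilde B = T^*\diag\bigl(e^{\ii\arctan t}, e^{-\ii \arctan t}\bigr) T,
        \qquad T = (1+t^2)^{1/4}\, U_2^* .
\]
The two angles lie in $(-\pi/2,\pi/2)$, an interval of length less than $\pi$, so by uniqueness of phases they are the phases of $\widetilde B$. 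Ordering them as $\phi_1 \ge \phi_2$ gives $\pm\arctan|t|$. This establishes \eqref{eq:2by2-phase-formula}.

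The only step needing care is the identity $R^T J R = \det(R) J$, together with correct normalization so that the Hermitian part becomes exactly $I_2$. As an alternative cross-check, avoiding any appeal to the uniqueness of the decomposition, one can use the characterization of $\phi_1$ as the maximal angle of $x^*\widetilde B x$. For $\|x\|=1$ we have $x^*\widetilde B x = 1 + t\, x^*Jx$. Since $J$ is skew-symmetric, $x^*Jx$ is purely imaginary and ranges over $\ii[-1,1]$. The maximal argument is therefore $\arctan|t|$, and congruence preserves the angular field of values $W'$.
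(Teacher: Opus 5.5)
Your proof is correct and follows the same route as the paper: a congruence by $H^{-1/2}$ makes the Hermitian part $I_2$, and the phases are then read off from the normal matrix $I+tJ$ with eigenvalues $1\pm\ii t$. Your identity $R^TJR=\det(R)J$ and your explicit sectorial decomposition via $U_2$ simply make precise the paper's claim that $H^{-1/2}JH^{-1/2}$ has eigenvalues $\pm\ii/\sqrt{\det H}$, and its implicit use of the fact that phases of a normal sectorial matrix are the phases of its eigenvalues. One small wording slip: $(-\pi/2,\pi/2)$ has length exactly $\pi$, but the two angles $\pm\arctan t$ lie in a closed interval of length $2|\arctan t|<\pi$, which is what the uniqueness argument needs.
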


\begin{proof}
The proof has two steps.  First remove the positive definite symmetric part by congruence; then compute the eigenvalues of the resulting real normal form.

By congruence invariance of matrix phases, \(B\) has the same phases as
\[
        H^{-1/2}BH^{-1/2}=I+sH^{-1/2}JH^{-1/2}.
\]
For every positive definite \(2\times2\) matrix \(H\),
\[
        H^{-1/2}JH^{-1/2}
\]
has eigenvalues \(\pm \ii/\sqrt{\det H}\).  Hence the eigenvalues of the congruent sectorial factor are
\[
        1\pm \ii\frac{s}{\sqrt{\det H}},
\]
and their phases are \(\pm\arctan(|s|/\sqrt{\det H})\).
\end{proof}

\begin{lemma}[Phase monotonicity under real compression]\label{lem:compression}
Let \(\Delta\in\R^{n\times n}\) be sectorial and \(X\in\R^{n\times k}\) have full column rank.  Then \(X^T\Delta X\) is sectorial and
\begin{equation}\label{eq:compression-monotonicity}
        \phi_1(X^T\Delta X)\le \phi_1(\Delta).
\end{equation}
If \(\Delta+\Delta^T \succ 0\), then \(X^T\Delta X\) is strictly accretive.
\end{lemma}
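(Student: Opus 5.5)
The plan is to read $\phi_1$ off the angular field of values and to compare the field of values of the compression with that of $\Delta$ through a substitution of vectors.

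\emph{Step 1: sectoriality of the compression.} Fix any $y\in\C^k\setminus\{0\}$ and set $x:=Xy$. Since $X$ is real with full column rank, $x\in\C^n\setminus\{0\}$. Because $X$ is real, $X^*=X^T$, so
\[
y^*(X^T\Delta X)y=(Xy)^*\Delta(Xy)=x^*\Delta x .
\]
This gives the inclusion $W'(X^T\Delta X)\subseteq W'(\Delta)$. Sectoriality of $\Delta$ means $0\notin W(\Delta)$, so $W'(\Delta)$ is a closed convex cone contained in an open half-plane through the origin (excluding the apex $0$). The inclusion shows that $0\notin W(X^T\Delta X)$, since $x^*\Delta x\neq 0$ for $x\neq 0$. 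Hence $X^T\Delta X$ is sectorial.

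\emph{Step 2: the phase inequality.} For a sectorial matrix, the paper records (via the Horn--Steinberg characterization) that $\phi_1$ is the maximal angle $\angle x^*Ax$ over unit vectors, equivalently over nonzero vectors, since scaling does not change the angle. The supporting rays of $W'(X^T\Delta X)$ lie within those of $W'(\Delta)$. Taking the maximum of $\angle$ over the smaller set therefore yields \eqref{eq:compression-monotonicity}.

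The one delicate point is the choice of phase branch. The angles must be measured in the same interval of length less than $\pi$ for both matrices. Since $W'(X^T\Delta X)\subseteq W'(\Delta)$, I would use the branch fixed for $\Delta$ (an interval containing the cone $W'(\Delta)$) for both matrices, and note that the phases defined this way are the ones in the sectorial decomposition. In our setting, with $\He(\Delta)\succ0$, this branch is simply $(-\pi/2,\pi/2)$, so no ambiguity arises.

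\emph{Step 3: strict accretivity.} If $\Delta+\Delta^T\succ0$, then $X^T\Delta X+(X^T\Delta X)^T=X^T(\Delta+\Delta^T)X$. This is positive definite, because $X^T(\Delta+\Delta^T)X$ is a congruence of a positive definite matrix by a full-column-rank real matrix: for real $y\neq0$, the vector $Xy\neq0$, so $y^TX^T(\Delta+\Delta^T)Xy>0$.

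I do not expect any real obstacle. The only point requiring care is the branch consistency in Step 2, which I would handle as described above.
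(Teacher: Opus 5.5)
Your proof is correct, but for the phase inequality it takes a different route from the paper.

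\textbf{The paper's route.} It writes $X=UR$ (QR factorization) and uses congruence invariance of phases to replace $X^T\Delta X$ by $U^T\Delta U$. It then quotes the compression interlacing theorem for matrix phases (Lemma~4.1 and Remark~4.2 of Wang et al.).

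\textbf{Your route.} You work directly with the Horn--Steinberg max characterization of $\phi_1$ and the inclusion $W'(X^T\Delta X)\subseteq W'(\Delta)$, obtained from the substitution $x=Xy$. Because you use the angular field of values rather than the numerical range, the fact that $\|Xy\|\neq\|y\|$ is irrelevant: only angles matter. So you need neither the orthonormalization step nor congruence invariance.

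\textbf{What each approach buys.} Yours is more elementary and self-contained given the characterization the paper already records. It also makes explicit that $X^T\Delta X$ is sectorial for a general sectorial $\Delta$, which the paper leaves implicit; the paper only checks strict accretivity when $\Delta+\Delta^T\succ0$. The paper's route gives more than is needed here: the full interlacing inequalities for all phases of the compression, not just $\phi_1$. It also inherits the branch bookkeeping from the cited reference, whereas you handle the branch by hand.

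\textbf{Your branch handling.} It is sound. A real sectorial $\Delta$ has either $\He(\Delta)\succ0$ or $\He(\Delta)\prec0$, and the compression inherits the same sign. Hence a common branch, $(-\pi/2,\pi/2)$ or $(\pi/2,3\pi/2)$, serves both matrices, and taking the maximum angle over the smaller set yields the inequality.
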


\begin{proof}
Write the QR factorization \(X=UR\), where \(U^TU=I_k\) and \(R\in \mathbb{GL}_k(\R)\).  Since phases are invariant under congruence,
\[
        \phi_1(X^T\Delta X)=\phi_1(U^T\Delta U).
\]
The compression interlacing theorem for matrix phases gives
\(\phi_1(U^T\Delta U)\le\phi_1(\Delta)\); see
\cite[Lemma~4.1 and Remark~4.2]{WangChenKhongQiu2020}.  If \(\Delta+\Delta^T \succ 0\), then
\[
        \He(X^T\Delta X)=X^T\He(\Delta)X \succ 0,
\]
so the compression is strictly accretive.
\end{proof}

\begin{lemma}[Polygon lower bound]\label{lem:polygon}
For $j = 1, \ldots, n$,  let  \(\eta_j\in\R\), \(p_j\ge0\), and let \(\sum_{j=1}^n p_j=1\).  Put
\[
        \rho:=\max\{0,2\max_jp_j-1\}.
\]
Then
\begin{equation}\label{eq:polygon-bound}
        \left|\sum_{j=1}^n p_je^{\ii\eta_j}\right|\ge \rho.
\end{equation}
\end{lemma}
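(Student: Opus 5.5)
The plan is to isolate the largest weight and bound everything else by the triangle inequality. If $2\max_j p_j - 1 \le 0$, then $\rho = 0$ and \eqref{eq:polygon-bound} holds trivially, since a modulus is nonnegative. So assume $\rho = 2p_k - 1 > 0$, where $k$ is an index with $p_k = \max_j p_j$; in this case $p_k > 1/2$.

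Write
\[
        \sum_{j=1}^n p_j e^{\ii\eta_j} = p_k e^{\ii\eta_k} + \sum_{j\neq k} p_j e^{\ii\eta_j}.
\]
Apply the reverse triangle inequality, $|a+b|\ge |a|-|b|$, with $a = p_k e^{\ii\eta_k}$. This gives
\[
        \left|\sum_{j=1}^n p_je^{\ii\eta_j}\right| \ge p_k - \sum_{j\ne k} p_j |e^{\ii\eta_j}| = p_k - (1-p_k) = 2p_k - 1 = \rho,
\]
where we used $|e^{\ii\eta_j}| = 1$, $p_j \ge 0$, and $\sum_j p_j = 1$.

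Combining the two cases yields $\left|\sum_j p_j e^{\ii\eta_j}\right| \ge \max\{0, 2\max_j p_j - 1\}$, which is \eqref{eq:polygon-bound}.

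There is no real obstacle here. The only care needed is to single out one maximizing index $k$ and to handle the case $\rho = 0$ separately, so that the bound is never claimed to be negative. Geometrically, the point $\sum_j p_j e^{\ii\eta_j}$ lies within distance $1-p_k$ of $p_k e^{\ii\eta_k}$, which is why the name "polygon bound" fits.
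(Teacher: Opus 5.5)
Your proof is correct and follows essentially the same route as the paper: dispose of the case $\rho = 0$ trivially, isolate a maximizing index, and combine the reverse triangle inequality with the triangle inequality on the remaining terms to obtain $2p_k - 1$. The paper writes the split as a difference with the remaining phases shifted by $\pi$ and notes that the maximizer is unique when $\max_j p_j > 1/2$, but these differences are only cosmetic.
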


\begin{proof}
Let \(\hat{p} := \max_jp_j \).  If \(\hat{p} \le1/2\), the right-hand side of \eqref{eq:polygon-bound} is zero and the inequality therefore holds trivially.  If \(\hat{p} > 1/2\), then $\rho = 2\hat{p}-1 > 0$ and there is also a unique index \( \hat{j} \) with \(p_{\hat{j}}=\hat{p}\). In this case, the expression in \eqref{eq:polygon-bound} is proved by the following chain of inequalities:
\[
        \left|\sum_{j=1}^n p_je^{\ii\eta_j}\right|
        = \left|\hat{p} e^{\ii\eta_{\hat{j}}} - \sum_{j \ne \hat{j}} p_je^{\ii(\eta_j + \pi)}\right|
        \geq \left| \left|\hat{p} e^{\ii\eta_{\hat{j}}}\right| - \left|\sum_{j \ne \hat{j}} p_je^{\ii(\eta_j + \pi)} \right| \right|
        \ge \hat{p} - \sum_{j \ne \hat{j}}p_j
        =2\hat{p}-1 = \rho.
\]
where, in particular, the first inequality is the reverse triangle inequality, and the second inequality uses the triangle inequality on the second term as well as the fact that $\hat{p} > 1/2 > \sum_{j \ne \hat{j}}p_j$.
\end{proof}

\subsection{A bound based on two-dimensional restriction onto the witness plane}\label{sec:forced-compression}

This section converts an arbitrary real destabilizing perturbation into a two-dimensional real witness. The main idea is as follows. If $\Delta$ is destabilizing and \((I+M\Delta)z=0\), then the real and imaginary parts of \(z\) span a real plane on which the restriction of \(\Delta\) is shown to be strictly accretive and already carry a definite phase; we call this plane the witness plane.
Once the problem has been compressed to that plane, the largest phase of a real \(2\times2\) strictly accretive matrix can be computed, and this phase gives a lower bound for the largest phase of the destabilizing $\Delta$. Subsequently, we also get a witness, i.e., a necessary condition, for when a destabilizing $\Delta$ exists.

\begin{lemma}[Independence of vectors spanning the witness plane]
\label{lem:witness-plane-independent}
Let
\[
        M=\diag(e^{\ii\mu_1},\ldots,e^{\ii\mu_n}),
        \qquad -\pi<\mu_j<\pi,
\]
and \(\Delta\in\R^{n\times n}\) satisfy \(\Delta+\Delta^T \succ 0\).  If
\(z\in\C^n\setminus\{0\}\) satisfies
\begin{equation}\label{eq:witness-equation}
        \Delta z=-M^{-1}z,
\end{equation}
then \(\Rea z\) and \(\Ima z\) are linearly independent over \(\R\).
\end{lemma}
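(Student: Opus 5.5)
Argue by contradiction. Suppose $\Rea z$ and $\Ima z$ are linearly dependent over $\R$. Since $z \neq 0$, we can then write $z = c\,v$ with $v \in \R^n\setminus\{0\}$ and $c \in \C\setminus\{0\}$. To see this, note that if $\Rea z \neq 0$ then $\Ima z = t\Rea z$ for some real $t$, so $z = (1+\ii t)\Rea z$. The case $\Rea z = 0$ is handled symmetrically, with $z = \ii\,\Ima z$. Dividing \eqref{eq:witness-equation} by $c$, which is allowed because both sides are linear in $z$, gives $\Delta v = -M^{-1}v$ with $v$ real.

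Next, take the inner product with $v$ and compare the two sides. The left side is $v^T\Delta v = v^T \He(\Delta) v > 0$, a strictly positive real number, because $\Delta+\Delta^T\succ 0$ and $v\neq 0$. The right side is
\[
-v^T M^{-1} v = -\sum_{j=1}^n v_j^2 e^{-\ii\mu_j}.
\]
Because $v$ is real, we have $v^* = v^T$, so this is exactly the quadratic form $v^*(-M^{-1})v$.

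It remains to show that this right side cannot be a positive real number. That would require $\sum_j v_j^2 e^{-\ii\mu_j}$ to be a negative real number, i.e.\ its real part $\sum_j v_j^2\cos\mu_j$ would have to be negative. Here the stated hypothesis $-\pi<\mu_j<\pi$ alone is not obviously enough, since $\cos\mu_j$ can be negative for some $j$. So I expect this to be the main obstacle, and I would proceed as follows. Multiply $\Delta v=-M^{-1}v$ componentwise by $e^{\ii\mu_j}$ to get $e^{\ii\mu_j}(\Delta v)_j = -v_j$. The vector $\Delta v$ is real, and so is $v$. Hence for every index $j$ with $v_j\neq 0$, $e^{\ii\mu_j}$ must be real, which forces $\mu_j = 0$ because $\mu_j\in(-\pi,\pi)$. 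This gives $(\Delta v)_j = -v_j$ whenever $v_j\neq0$. For indices with $v_j=0$ we get $(\Delta v)_j=0$ since $e^{\ii\mu_j}\neq 0$. Therefore $\Delta v=-v$, and so $v^T\Delta v=-\|v\|^2<0$, which contradicts $v^T\Delta v>0$.

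This componentwise reality argument is the key step, and it avoids the sign issue entirely. Consequently $\Rea z$ and $\Ima z$ must be linearly independent over $\R$.
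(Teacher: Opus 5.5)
Your proof is correct and follows essentially the same route as the paper: you write $z = c v$ with $v$ real, use the reality of $\Delta v$ coordinatewise to force $\mu_j = 0$ on the support of $v$, conclude $\Delta v = -v$, and contradict $v^T\Delta v > 0$. The quadratic-form attempt in your third paragraph is not needed and can be dropped, since the componentwise argument alone completes the proof.
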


\begin{proof}
Suppose that \(\Rea z\) and \(\Ima z\) are linearly dependent.  Then
\(z=cx\) for some \(c\in\C\setminus\{0\}\) and some
\(x\in\R^n\setminus\{0\}\).  Dividing \eqref{eq:witness-equation} by \(c\) gives
\[
        \Delta x=-M^{-1}x .
\]
The left-hand side is real.  Hence each coordinate
\(-e^{-\ii\mu_j}x_j\) is real.  Since $-\pi<\mu_j<\pi$, this can happen with $x_j\ne0$ only when $\mu_j=0$. Therefore $x$ is supported only on the zero-phase coordinates. Since $M$ is diagonal, on that support of $x$ the elements on the diagonal are equal to $1$ and hence the corresponding elements in $M^{-1}$ are also equal to $1$. Therefore we must have that
\[
        \Delta x=-x .
\]
Multiplying from the left by $x^T$ gives
\[
        x^T\Delta x=-\|x\|^2<0.
\]
But \(\Delta+\Delta^T \succ 0\) implies
\[
        x^T\Delta x=x^T\He(\Delta)x>0,
\]
which is a contradiction.  Thus \(\Rea z\) and \(\Ima z\) are linearly independent.
\end{proof}

The next lemma contains a number of identities that, as the name of the lemma indicates, have to do with the witness plane.

\begin{lemma}[Coordinate identities for the witness plane]\label{lem:coordinate-identities}
Let \(z\in\C^n\setminus\{0\}\), let $-\pi<\mu_j<\pi$, for $j = 1, \ldots, n$, and define
\begin{equation}\label{eq:p-q-r-rho-def}
        p_j:=\frac{|z_j|^2}{\|z\|^2},
        \qquad
        q:=-\sum_{j=1}^n p_j\cos\mu_j,
        \qquad
        r:=\sum_{j=1}^n p_j\sin\mu_j,
        \qquad
        \rho:=\max\{0,2\max_jp_j-1\}.
\end{equation}
Next, set
$w_j :=-e^{-\ii\mu_j}z_j$,
\[
        X:=\begin{bmatrix}\Rea z&\Ima z\end{bmatrix},
        \quad
        Y:=\begin{bmatrix}\Rea w&\Ima w\end{bmatrix},
        \quad
        A:=X^TY \in \R^{2 \times 2},
        \quad
        H:=\He(A),
        \quad
        \Sk(A)=sJ.
\]
Then
\begin{equation}\label{eq:trace-skew-identities}
        \tr H=\|z\|^2q,
        \qquad
        |s|=\frac{\|z\|^2}{2}|r|.
\end{equation}
Moreover, writing the traceless part of \(H\) as
\[
        H-\frac{\tr H}{2}I_2=
        \begin{bmatrix}a&b\\ b&-a\end{bmatrix},
\]
there exist angles \(\eta_j\) such that
\begin{equation}\label{eq:traceless-vector-identity}
        a+\ii b=\frac{\|z\|^2}{2}\sum_{j=1}^n p_je^{\ii\eta_j}.
\end{equation}
Consequently,
\begin{equation}\label{eq:traceless-lower-det-upper}
        a^2+b^2\ge \frac{\|z\|^4}{4}\rho^2
        \qquad \text{and} \qquad
        \det H\le \frac{\|z\|^4}{4}(q^2-\rho^2).
\end{equation}
\end{lemma}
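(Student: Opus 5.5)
The identities follow by writing everything coordinatewise. Set $z_j=x_j+\ii y_j$ and $w_j=u_j+\ii v_j$. Then the entries of $A=X^TY$ are
\[
A_{11}=\sum_j x_ju_j,\qquad A_{12}=\sum_j x_jv_j,\qquad A_{21}=\sum_j y_ju_j,\qquad A_{22}=\sum_j y_jv_j .
\]
The key observation is that these four real bilinear sums are real and imaginary parts of two complex sums, $\sum_j \bar z_j w_j$ and $\sum_j z_j w_j$. Indeed,
\[
\Rea(\bar z_jw_j)=x_ju_j+y_jv_j,\qquad \Ima(\bar z_jw_j)=x_jv_j-y_ju_j,
\]
\[
\Rea(z_jw_j)=x_ju_j-y_jv_j,\qquad \Ima(z_jw_j)=x_jv_j+y_ju_j .
\]

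For the trace, $\tr H=\tr A=\sum_j\Rea(\bar z_jw_j)$. Since $\bar z_jw_j=-e^{-\ii\mu_j}|z_j|^2$, this gives $\tr H=-\sum_j|z_j|^2\cos\mu_j=\|z\|^2q$.

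For the skew part, $s=(A_{21}-A_{12})/2=-\tfrac12\sum_j\Ima(\bar z_jw_j)$. Now $\Ima(\bar z_jw_j)=|z_j|^2\sin\mu_j$, so $s=-\tfrac12\|z\|^2r$, and $|s|=\tfrac{\|z\|^2}{2}|r|$.

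For the traceless part of $H=\tfrac12(A+A^T)$, compute
\[
a=\tfrac12(A_{11}-A_{22})=\tfrac12\Rea\sum_j z_jw_j,\qquad b=\tfrac12(A_{12}+A_{21})=\tfrac12\Ima\sum_j z_jw_j .
\]
Hence $a+\ii b=\tfrac12\sum_j z_jw_j$. Each term is
\[
z_jw_j=-e^{-\ii\mu_j}z_j^2=|z_j|^2e^{\ii\eta_j},\qquad \eta_j:=\pi-\mu_j+2\arg z_j,
\]
with $\eta_j$ chosen arbitrarily when $z_j=0$. Factoring out $\|z\|^2$ gives \eqref{eq:traceless-vector-identity}.

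The lower bound on $a^2+b^2$ is now immediate from \Cref{lem:polygon}: $|a+\ii b|\ge\tfrac{\|z\|^2}{2}\rho$, and squaring gives the first part of \eqref{eq:traceless-lower-det-upper}.

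For the determinant, write $H=\tfrac{\tr H}{2}I_2+\begin{bmatrix}a&b\\ b&-a\end{bmatrix}$. Then
\[
\det H=\frac{(\tr H)^2}{4}-a^2-b^2\le\frac{\|z\|^4q^2}{4}-\frac{\|z\|^4\rho^2}{4},
\]
which is the second part of \eqref{eq:traceless-lower-det-upper}.

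The only step needing care is the sign and index bookkeeping in identifying the entries of $A$ with $\Rea/\Ima$ of $\sum\bar z_jw_j$ and $\sum z_jw_j$. I will double-check it on $n=1$. Everything else is direct substitution, and the $\eta_j$ need not be made explicit beyond existence.
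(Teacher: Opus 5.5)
Your proof is correct and follows essentially the paper's route: you split $A$ into its trace, skew part and traceless symmetric part, identify the traceless part with $\tfrac{\|z\|^2}{2}\sum_j p_j e^{\ii\eta_j}$ where $\eta_j=\pi-\mu_j+2\arg z_j$ (the paper's $\eta_j$), and finish with \Cref{lem:polygon} and $\det H=(\tr H)^2/4-a^2-b^2$. The only difference is in the computation: the paper expands each coordinate's contribution $x_j^Ty_j$ in polar form with angle-addition formulas, whereas you read off the entries of $A$ as real and imaginary parts of $\sum_j\bar z_jw_j$ and $\sum_j z_jw_j$, which makes the sign bookkeeping (including $s=-\tfrac{\|z\|^2}{2}r$) immediate and correct.
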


\begin{proof}
First, write $z$ in polar form as \(z_j=|z_j|e^{\ii\psi_j}\), for $j = 1, \ldots, n$. Since $w_j=e^{\ii(\pi-\mu_j)}z_j$, the \(j\)-th rows of \(X\) and \(Y\) are
\[
        x_j=|z_j|\begin{bmatrix}\cos\psi_j&\sin\psi_j\end{bmatrix},
        \qquad
        y_j=|z_j|\begin{bmatrix}\cos(\psi_j+\pi-\mu_j)&\sin(\psi_j+\pi-\mu_j)\end{bmatrix}.
\]
The contribution of the \(j\)-th coordinate to \(A=X^TY\) is 
\[
x_j^Ty_j = |z_j|^2
\begin{bmatrix}
\cos\psi_j \cos(\psi_j+\pi-\mu_j) & \cos\psi_j \sin(\psi_j+\pi-\mu_j) \\ \sin\psi_j\cos(\psi_j+\pi-\mu_j) & \sin\psi_j\sin(\psi_j+\pi-\mu_j)
\end{bmatrix}
\]
A direct calculation, using the difference-of-angle formula for $\cos$ and $\sin$, respectively, gives that the symmetric part of this matrix has trace
$-|z_j|^2\cos\mu_j$, and that its skew part is given by
\[
        -\frac{|z_j|^2}{2}\sin\mu_j\,J.
\]
Summing the two identities over \(j\) gives \eqref{eq:trace-skew-identities}.

Next, the traceless symmetric part of the contribution of the \(j\)-th coordinate has the form
\[
        \frac{|z_j|^2}{2}
        \begin{bmatrix}
        \cos(2\psi_j+\pi-\mu_j)&\sin(2\psi_j+\pi-\mu_j)\\
        \sin(2\psi_j+\pi-\mu_j)&-\cos(2\psi_j+\pi-\mu_j)
        \end{bmatrix}.
\]
Thus \eqref{eq:traceless-vector-identity} holds with
\(\eta_j=2\psi_j+\pi-\mu_j\).  Applying \Cref{lem:polygon} to \eqref{eq:traceless-vector-identity} gives the lower bound on \(a^2+b^2\) in \eqref{eq:traceless-lower-det-upper}.  Finally,
\[
        \det H=\left(\frac{\tr H}{2}\right)^2-a^2-b^2,
\]
so \eqref{eq:trace-skew-identities} together with the lower bound on \(a^2+b^2\) implies the determinant upper bound in \eqref{eq:traceless-lower-det-upper}.
\end{proof}

If destabilization is possible, the following proposition gives a lower bound on the largest phase of a destabilizing matrix $\Delta$; this lower bound is obtained from a compression of $\Delta$ onto the witness plane. Moreover, it also gives a necessary condition for when destabilization is possible in terms of the inequality \eqref{eq:witness-q-rho}, i.e., a witness.

\begin{proposition}[Two-dimensional witness bound]\label{prop:witness-bound}
Let
\[
        M=\diag(e^{\ii\mu_1},\ldots,e^{\ii\mu_n}),
        \qquad
        \pi>\mu_1\ge\cdots\ge\mu_n>-\pi/2,
        \qquad
        \mu_1-\mu_n<\pi,
\]
and let \(\Delta\in\R^{n\times n}\) satisfy \(\Delta+\Delta^T \succ 0\) and \(\det(I+M\Delta)=0\).  Choose \(z\ne0\) such that \(\Delta z=-M^{-1}z\), and define \(p_j,q,r,\rho\) from $z$ and the phases $\mu_j$ by \eqref{eq:p-q-r-rho-def}.  Then
\begin{equation}\label{eq:witness-q-rho}
        q>\rho
\end{equation}
and
\begin{equation}\label{eq:witness-phase-bound}
        \phi_1(\Delta)
        \ge
        \arctan\frac{|r|}{\sqrt{q^2-\rho^2}}.
\end{equation}
\end{proposition}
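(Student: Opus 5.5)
Set $X:=\begin{bmatrix}\Rea z&\Ima z\end{bmatrix}$ and compress $\Delta$ to the witness plane via $B:=X^T\Delta X\in\R^{2\times2}$. By \Cref{lem:witness-plane-independent}, $X$ has full column rank. \Cref{lem:compression} then shows that $B$ is strictly accretive and that $\phi_1(B)\le\phi_1(\Delta)$. The key observation is that $B$ coincides with the matrix $A=X^TY$ of \Cref{lem:coordinate-identities}. Indeed, $\Delta z=-M^{-1}z=w$, and since $\Delta$ is real, taking real and imaginary parts gives $\Delta\Rea z=\Rea w$ and $\Delta\Ima z=\Ima w$, i.e.\ $\Delta X=Y$. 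Hence $B=X^T\Delta X=X^TY=A$, so $H=\He(A)\succ0$ and $\Sk(A)=sJ$ with $s$ as in that lemma.

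Next I will derive \eqref{eq:witness-q-rho}. Since $H\succ0$, we have $\tr H>0$ and $\det H>0$. By \eqref{eq:trace-skew-identities}, $\tr H=\|z\|^2q$, so $q>0$. Then \eqref{eq:traceless-lower-det-upper} gives $0<\det H\le\frac{\|z\|^4}{4}(q^2-\rho^2)$, hence $q^2>\rho^2$. Because $q>0$ and $\rho\ge0$, this yields $q>\rho$.

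For \eqref{eq:witness-phase-bound}, I will apply \Cref{lem:2by2-phase} to $B=A$:
\[
\tan\phi_1(A)=\frac{|s|}{\sqrt{\det H}}\ge\frac{\tfrac{\|z\|^2}{2}|r|}{\tfrac{\|z\|^2}{2}\sqrt{q^2-\rho^2}}=\frac{|r|}{\sqrt{q^2-\rho^2}},
\]
using $|s|=\frac{\|z\|^2}{2}|r|$ and the determinant upper bound. Both $\phi_1(A)$ and the arctangent lie in $[0,\pi/2)$, where $\arctan$ is increasing. Therefore $\phi_1(\Delta)\ge\phi_1(A)\ge\arctan\bigl(|r|/\sqrt{q^2-\rho^2}\bigr)$.

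The only step needing care is the identification $X^T\Delta X=A$. It relies on $\Delta$ being real, so that it commutes with taking real and imaginary parts. It also relies on the linear independence of $\Rea z$ and $\Ima z$, so that the compression lemma applies to a genuine two-dimensional plane. Once these are in place, the rest is direct bookkeeping with the identities already established.
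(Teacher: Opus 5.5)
Your proposal is correct and follows the paper's proof essentially step for step. You use the realness of $\Delta$ to get $\Delta X=Y$, so that the compression $X^T\Delta X$ equals $A=X^TY$, with \Cref{lem:witness-plane-independent} and \Cref{lem:compression} justifying the compression. You then read off $q>\rho$ from $\tr H>0$ and $\det H>0$ via \Cref{lem:coordinate-identities}, and you get the phase bound from \Cref{lem:2by2-phase}; taking $\phi_1(A)\in[0,\pi/2)$ rather than the paper's open interval is, if anything, slightly more careful when $s=0$.
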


\begin{proof}
The proof has four steps.  First, we form the real witness plane.  Second, we compress \(\Delta\) to that plane.  Third, we compute the trace, skew-Hermitian part, and determinant of the compressed \(2\times2\) matrix.  Fourth, we apply the explicit \(2\times2\) phase formula.

Let
\[
        X:=\begin{bmatrix}\Rea z&\Ima z\end{bmatrix},
        \qquad
        y:=-M^{-1}z,
        \qquad
        Y:=\begin{bmatrix}\Rea y&\Ima y\end{bmatrix}.
\]
By \Cref{lem:witness-plane-independent}, \(X\) has full column rank.  Since \(\Delta\) is real and \(\Delta z=y\), we have that $\Delta X=Y$ and hence
\[
        A:=X^TY=X^T\Delta X,
\]
i.e., $A$ is a compression of $\Delta$ onto the witness plane.
By \Cref{lem:compression}, \(A\) is strictly accretive and
\begin{equation}\label{eq:phi-A-leq-phi-Delta}
        \phi_1(A)\le\phi_1(\Delta).
\end{equation}

For each coordinate,
\[
        y_j=-e^{-\ii\mu_j}z_j,
\]
and hence we can apply \Cref{lem:coordinate-identities} to \(X,Y\).  To this end, write \(H=\He(A)\) and \(\Sk(A)=sJ\) and note that from \eqref{eq:trace-skew-identities} and \eqref{eq:traceless-lower-det-upper},
\[
        \tr H=\|z\|^2q,
        \qquad
        \det H\le \frac{\|z\|^4}{4}(q^2-\rho^2).
\]
Since \(H \succ 0\), we have that \(\tr H>0\) and \(\det H>0\), and hence \(q>0\) and \(q^2-\rho^2>0\), which gives \(q>\rho\).

Finally, \Cref{lem:2by2-phase} and \eqref{eq:trace-skew-identities} give
\[
        \tan\phi_1(A)
        =\frac{|s|}{\sqrt{\det H}}
        \ge
        \frac{(\|z\|^2/2)|r|}{(\|z\|^2/2)\sqrt{q^2-\rho^2}}
        =\frac{|r|}{\sqrt{q^2-\rho^2}}.
\]
Since \(0<\phi_1(A)<\pi/2\), applying \(\arctan\) and then using \eqref{eq:phi-A-leq-phi-Delta} proves \eqref{eq:witness-phase-bound}.
\end{proof}

\subsection{A bound in terms of phases of \texorpdfstring{$M$}{M}}\label{sec:scalar-optimization}

The previous section reduces the matrix problem to the scalar quantities \(q,r,\rho\), where \eqref{eq:witness-q-rho} is a necessary condition for the possibility to destabilize, and in which case \eqref{eq:witness-phase-bound} gives a lower bound on the largest phase of the matrix needed for destabilization. This section relates the quantities to the phases of $M$, and in particular to the largest and second largest phases.

\begin{lemma}[Scalar obstruction when the two largest phases are too small]\label{lem:scalar-obstruction}
Let
\[
        \pi>\mu_1\ge\mu_2\ge\cdots\ge\mu_n>-\pi/2,
        \qquad
        \mu_1-\mu_n<\pi,
\]
let \(p_j\ge0\), \(\sum_jp_j=1\), and define \(q,r,\rho\) by \eqref{eq:p-q-r-rho-def}.  If
\[
        \mu_1+\mu_2\le\pi,
\]
then
\begin{equation}\label{eq:q-le-rho-obstruction}
        q\le\rho.
\end{equation}
\end{lemma}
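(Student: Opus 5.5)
The plan is to bound $q=-\sum_j p_j\cos\mu_j$ directly, by separating the contribution of the largest phase $\mu_1$ from that of all the others. The key observation is that the hypothesis $\mu_1+\mu_2\le\pi$, together with $\mu_1-\mu_n<\pi$, forces every phase $\mu_j$ with $j\ge2$ to lie close enough to $0$ that $\cos\mu_j$ is at least as large as $|\cos\mu_1|$. The polygon quantity $\rho$ from \Cref{lem:polygon} is never needed in its full generality; only the elementary bound $\rho\ge 2p_1-1$ and $\rho\ge0$ is used.

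\emph{Step 1 (the easy case $\mu_1\le\pi/2$).} Here all phases lie in $(-\pi/2,\pi/2]$, since $\mu_n>-\pi/2$. Hence $\cos\mu_j\ge0$ for all $j$, which gives $q\le 0\le\rho$.

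\emph{Step 2 (the case $\mu_1>\pi/2$).} Put $c:=-\cos\mu_1=\cos(\pi-\mu_1)\in(0,1]$. I would show that $|\mu_j|\le\pi-\mu_1$ for every $j\ge2$.
\begin{itemize}[label=\textbullet]
\item For $\mu_j\ge0$: we have $\mu_j\le\mu_2\le\pi-\mu_1$ by the hypothesis $\mu_1+\mu_2\le\pi$.
\item For $\mu_j<0$: we have $\mu_j\ge\mu_n>\mu_1-\pi$ by $\mu_1-\mu_n<\pi$.
\end{itemize}
Since $0<\pi-\mu_1<\pi/2$ and cosine is even and decreasing on $[0,\pi]$, this gives $\cos\mu_j\ge\cos(\pi-\mu_1)=c$ for all $j\ge2$.

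\emph{Step 3 (combine).} Using $\sum_{j\ge2}p_j=1-p_1$, we get
\[
q=p_1c-\sum_{j\ge2}p_j\cos\mu_j\le p_1c-(1-p_1)c=c(2p_1-1).
\]
If $2p_1-1\le0$, then $q\le0\le\rho$. Otherwise, $c\le1$ gives $q\le 2p_1-1\le 2\max_jp_j-1=\rho$. This proves \eqref{eq:q-le-rho-obstruction}.

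The only delicate point is Step 2. One must notice that negative phases $\mu_j$ are controlled not by $\mu_1+\mu_2\le\pi$ but by the sector-width condition $\mu_1-\mu_n<\pi$. Without that condition, $\cos\mu_j$ could drop below $c$, and the bound in Step 3 would fail. Everything else is a one-line estimate.
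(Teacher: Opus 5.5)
Your proof is correct and matches the paper's argument essentially step for step: the same case split at $\mu_1=\pi/2$, and the same bound $\cos\mu_j\ge\cos(\pi-\mu_1)$ for $j\ge2$. You obtain that bound from $\mu_1+\mu_2\le\pi$ for positive phases and from $\mu_1-\mu_n<\pi$ for negative ones, and then conclude with the same estimate $q\le(2p_1-1)\cos(\pi-\mu_1)\le\rho$. The one cosmetic difference is that the paper notes $\cos(\pi-\mu_1)<1$ to obtain a strict intermediate inequality, whereas your $c\le1$ is all that is needed.
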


The conclusion $q\le\rho$ is a violation of \eqref{eq:witness-q-rho}, and thus our witness tells us that there is no $\Delta$ that can destabilize $M$ when $\mu_1+\mu_2\le\pi$.

\begin{proof}[Proof of Lemma~\ref{lem:scalar-obstruction}]
If $\mu_1\le\pi/2$, then all phases lie in $(-\pi/2,\pi/2]$. Hence $\cos\mu_j\ge0$ for every $j$, so $q\le0\le\rho$.

It remains to consider the case $\mu_1>\pi/2$. Put $a:=\pi-\mu_1$, so $0<a<\pi/2$. Since $\mu_1-\mu_n<\pi$, we have $\mu_j>-a$ for every $j$. Moreover, $\mu_1+\mu_2\le\pi$ implies $\mu_j\le\mu_2\le a$ for every $j\ge2$. Therefore $\cos\mu_j\ge\cos a$ for every $j\ge2$, and
\[
\begin{aligned}
        q
        &=-p_1\cos\mu_1-\sum_{j=2}^n p_j\cos\mu_j \\
        &\le p_1\cos a-(1-p_1)\cos a
        =(2p_1-1)\cos a .
\end{aligned}
\]
If $p_1\le1/2$, this gives $q\le0\le\rho$. If $p_1>1/2$, then
\[
        q\le(2p_1-1)\cos a<2p_1-1\le\rho.
\]
Thus $q\le\rho$ in all cases.
\end{proof}

In the case where destabilization is possible, the following two lemmas derive relations that we then use to bound the right-hand side of \eqref{eq:witness-phase-bound} in terms of $\mu_1$ and $\mu_2$.

\begin{lemma}
\label{lem:rotated-estimate}
Assume
\[
        \pi>\mu_1\ge\mu_2\ge\cdots\ge\mu_n>-\pi/2,
        \qquad
        \mu_1-\mu_n<\pi,
        \qquad
        \mu_1+\mu_2>\pi.
\]
Let
\[
        s:=\frac{\mu_1+\mu_2}{2},
        \qquad
        d:=\frac{\mu_1-\mu_2}{2}.
\]
Then $0\le d<\pi-s<\pi/2$, and every choice of weights \(p_j\ge0\), \(\sum_jp_j=1\), satisfies
\begin{equation}\label{eq:rotated-estimate}
        q\sin s+r\cos s\le \rho\sin d,
\end{equation}
where \(q,r,\rho\) are defined by \eqref{eq:p-q-r-rho-def}. Moreover, if $q>\rho$, then $r>0$.
\end{lemma}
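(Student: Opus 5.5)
The approach is to recognise the left-hand side of \eqref{eq:rotated-estimate} as a weighted sum of sines. Substituting the definitions of $q$ and $r$ from \eqref{eq:p-q-r-rho-def} gives
\[
        q\sin s+r\cos s=\sum_{j=1}^n p_j\bigl(\sin\mu_j\cos s-\cos\mu_j\sin s\bigr)=\sum_{j=1}^n p_j\sin(\mu_j-s).
\]
The whole lemma then reduces to locating the angles $\mu_j-s$ and comparing $\sin(\mu_j-s)$ with $\pm\sin d$.

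First I would settle the elementary inequalities.
\begin{itemize}
\item $d\ge0$ holds because $\mu_1\ge\mu_2$.
\item $d<\pi-s$ is equivalent to $\mu_1<\pi$.
\item $\pi-s<\pi/2$ is equivalent to $\mu_1+\mu_2>\pi$.
\end{itemize}
In particular $s\in(\pi/2,\pi)$, so $\sin s>0$ and $\cos s<0$, and $d\in[0,\pi/2)$, so $\sin d\ge0$.

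Next comes a termwise bound.
\begin{itemize}
\item For $j=1$, $\mu_1-s=d$, so that term contributes exactly $p_1\sin d$.
\item For $j\ge2$, I claim $\mu_j-s\in(d-\pi,-d]$. The upper end follows from $\mu_j\le\mu_2=s-d$. The lower end follows from $\mu_j\ge\mu_n>\mu_1-\pi=s+d-\pi$, using $\mu_1-\mu_n<\pi$.
\item On the interval $[d-\pi,-d]$ the sine is at most $-\sin d$. The interval is symmetric about $-\pi/2$ and $\sin$ attains $-\sin d$ at both endpoints; it is nonempty because $d\le\pi/2$.
\end{itemize}
Hence
\[
        q\sin s+r\cos s\le p_1\sin d-(1-p_1)\sin d=(2p_1-1)\sin d .
\]
If $2p_1-1\le0$, this is at most $0\le\rho\sin d$. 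Otherwise $2p_1-1\le 2\max_jp_j-1=\rho$, and multiplying by $\sin d\ge0$ gives \eqref{eq:rotated-estimate}.

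For the final claim, assume $q>\rho$. Rearranging \eqref{eq:rotated-estimate} and using $\sin s>0$ gives
\[
        r\cos s\le\rho\sin d-q\sin s<\rho(\sin d-\sin s).
\]
Since $0\le d<\pi-s<\pi/2$ and the sine is increasing on $[0,\pi/2]$, we get $\sin d<\sin(\pi-s)=\sin s$. As $\rho\ge0$, the right-hand side is $\le0$, so $r\cos s<0$. Because $\cos s<0$, this gives $r>0$.

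The only step needing care is the interval bookkeeping for $j\ge2$, specifically that the hypothesis $\mu_1-\mu_n<\pi$ yields exactly the lower endpoint $d-\pi$. I expect this to go through directly as shown.
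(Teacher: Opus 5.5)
Your proof is correct. The verification of $0\le d<\pi-s<\pi/2$ and the proof of \eqref{eq:rotated-estimate} follow the paper's argument essentially step by step: rewrite the left-hand side as $\sum_j p_j\sin(\mu_j-s)$, place $\mu_j-s$ in $(d-\pi,-d]$ for $j\ge2$, and conclude with $(2p_1-1)\sin d\le\rho\sin d$. Your bound $\sin t\le-\sin d$ on $[d-\pi,-d]$ is justified only by symmetry and the endpoint values. It also silently uses that $\sin$ is convex on $[-\pi,0]$, so its maximum over a subinterval sits at an endpoint.

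The final assertion is where your route differs. You bootstrap $r>0$ from \eqref{eq:rotated-estimate} itself, using $q>\rho\ge0$, $\sin s>0$, $\cos s<0$ and $\sin d<\sin(\pi-s)=\sin s$. The paper instead runs a second, independent weighted-sine estimate with the angle $a:=\pi-\mu_1\in(0,\pi/2)$. Since $\mu_j+a\in(0,\pi]$ for every $j$, it gets $0\le\sum_jp_j\sin(\mu_j+a)=r\cos a-q\sin a$, hence $r\ge q\tan a$.

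Your version is more economical, since it needs no new estimate, but it genuinely uses the full hypothesis $q>\rho$. The paper's version needs only $q>0$ and never involves $\rho$ or $\mu_2$, beyond the latter guaranteeing $\mu_1>\pi/2$. It also yields the quantitative bound $r\ge q\tan(\pi-\mu_1)$. In \Cref{prop:scalar-two-angle-bound}, where the lemma is used, $q>\rho$ is assumed anyway, so either argument suffices.
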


\begin{proof}
The assumptions imply $\mu_2>0$, and hence $0\le d<s$. Since $\mu_1=s+d<\pi$, we also have $d<\pi-s<\pi/2$.
Now,
\[
        q\sin s+r\cos s
        =\sum_{j=1}^n p_j\sin(\mu_j-s).
\]
For $j=1$, $\sin(\mu_1-s)=\sin d$. For every $j\ge2$, the ordering and the sectorial width condition give
\[
        d-\pi<\mu_j-s\le -d.
\]
The maximum of \(\sin t\) on \((d-\pi, -d]\) is \(\sin (-d) = -\sin d\). Therefore
\[
        q\sin s+r\cos s
        \le p_1\sin d-(1-p_1)\sin d
        =(2p_1-1)\sin d
        \le \rho\sin d,
\]
which proves \eqref{eq:rotated-estimate}.

It remains to prove the final assertion. Put $a:=\pi-\mu_1$. Then $0<a<\pi/2$. Since $\mu_1-\mu_n<\pi$, we have $\mu_j+a>0$ for every $j$, and since $\mu_j\le\mu_1$, we have $\mu_j+a\le\pi$. Thus $\sin(\mu_j+a)\ge0$ for every $j$. Taking the weighted sum gives
\[
        0\le\sum_{j=1}^n p_j\sin(\mu_j+a)
        =r\cos a+\sum_{j=1}^n p_j\cos\mu_j\sin a
        =r\cos a-q\sin a.
\]
If $q>\rho$, then $q>0$, and hence $r\ge q\tan a>0$.
\end{proof}

\begin{lemma}[Algebraic angle conversion]\label{lem:algebraic-angle}
Let \(0\le d<m<\pi/2\), \(q>\rho\ge0\), and \(r>0\).  If
\begin{equation}\label{eq:algebraic-hypothesis}
        r\cos m-q\sin m\ge -\rho\sin d,
\end{equation}
then
\begin{equation}\label{eq:algebraic-angle-conclusion}
        \arctan\frac{r}{\sqrt{q^2-\rho^2}}
        \ge
        \arccos\frac{\cos m}{\cos d}.
\end{equation}
\end{lemma}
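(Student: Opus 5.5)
The plan is to reduce \eqref{eq:algebraic-angle-conclusion} to a comparison of tangents and then to a perfect-square inequality.

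\emph{Reduction to tangents.} Put $\theta^\ast:=\arccos(\cos m/\cos d)$. Since $0\le d<m<\pi/2$, we have $0<\cos m<\cos d$, so the argument lies in $(0,1)$ and $\theta^\ast\in(0,\pi/2)$. Moreover,
\[
        \tan\theta^\ast=\frac{\sqrt{\cos^2 d-\cos^2 m}}{\cos m}
        =\frac{\sqrt{\sin^2 m-\sin^2 d}}{\cos m}.
\]
The left-hand side of \eqref{eq:algebraic-angle-conclusion} also lies in $(0,\pi/2)$, because $r>0$ and $q^2-\rho^2>0$. Since $\tan$ is increasing on $(0,\pi/2)$, it therefore suffices to show
\[
        \frac{r}{\sqrt{q^2-\rho^2}}\ \ge\ \frac{\sqrt{\sin^2 m-\sin^2 d}}{\cos m}.
\]

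\emph{Using the hypothesis.} Because $\cos m>0$, \eqref{eq:algebraic-hypothesis} rearranges to
\[
        r\ \ge\ \frac{q\sin m-\rho\sin d}{\cos m}.
\]
Here $q\sin m-\rho\sin d>0$, since $q>\rho\ge0$ and $\sin m>\sin d\ge0$. It is therefore enough to prove
\[
        q\sin m-\rho\sin d\ \ge\ \sqrt{q^2-\rho^2}\,\sqrt{\sin^2 m-\sin^2 d}.
\]
Both sides are nonnegative, so we may square. The claim then becomes
\[
        (q\sin m-\rho\sin d)^2\ \ge\ (q^2-\rho^2)(\sin^2 m-\sin^2 d).
\]

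\emph{Key identity.} Expanding both sides, the terms $q^2\sin^2 m$ and $\rho^2\sin^2 d$ cancel. What remains is
\[
        (q\sin m-\rho\sin d)^2-(q^2-\rho^2)(\sin^2 m-\sin^2 d)
        =q^2\sin^2 d-2q\rho\sin m\sin d+\rho^2\sin^2 m
        =(q\sin d-\rho\sin m)^2\ \ge\ 0.
\]
This proves the squared inequality, and reversing the chain of reductions yields \eqref{eq:algebraic-angle-conclusion}.

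The only delicate point is keeping track of signs, so that the squaring and the division by $\cos m$ are legitimate and the monotonicity of $\arctan$ and $\arccos$ applies on $(0,\pi/2)$. The remaining work is the algebraic identity above, which I expect to be a routine check.
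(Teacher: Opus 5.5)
Your proof is correct and follows essentially the same route as the paper. Both arguments use the linear hypothesis to eliminate one variable, then a completed square to reach $(\sin^2 m-\sin^2 d)(q^2-\rho^2)\le r^2\cos^2 m$, and finally translate this into the angle inequality. The difference is minor: you eliminate $r$, so your square is the tidy $(q\sin d-\rho\sin m)^2$, and you compare tangents; the paper eliminates $q$, completes the square $(BCr-(A^2-B^2)\rho)^2/A^2$ in $r$ (with $A=\sin m$, $B=\sin d$, $C=\cos m$), and compares cosines.
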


\begin{proof}
The proof is mostly algebraic, and has little to do with the geometric properties and estimates. To highlight this, which also makes the proof easier to follow, put
\[
        A:=\sin m > 0,
        \qquad
        B:=\sin d \geq 0,
        \qquad
        C:=\cos m > 0,
\]
and note that $A > B$. With this notation, \eqref{eq:algebraic-hypothesis} reads $rC -q A \geq -\rho B$, which gives
\[
        q\le\frac{rC+\rho B}{A}.
\]
Since $q > 0$, squaring both sides and then subtracting $\rho^2$ on both sides gives
\[
        q^2-\rho^2
        \le
        \frac{(rC+\rho B)^2}{A^2}-\rho^2.
\]
Multiplying both sides by $(A^2 - B^2) > 0$ gives
\[
        (A^2-B^2)(q^2-\rho^2)\le (A^2-B^2)\left(\frac{(rC+\rho B)^2}{A^2}-\rho^2\right)  .
\]
Now, completing the square gives
\[
        C^2r^2 - (A^2-B^2)\left(\frac{(rC+\rho B)^2}{A^2}-\rho^2\right) = \frac{(BCr-(A^2-B^2)\rho)^2}{A^2}\ge0,
\]
which in turn gives
\[
        (A^2-B^2)(q^2-\rho^2)\le C^2r^2.
\]
Since
\[
        A^2-B^2=\sin^2m-\sin^2d=\cos^2d-\cos^2m,
\]
we obtain
\[
        \frac{q^2-\rho^2}{q^2-\rho^2+r^2}
        \le
        \frac{\cos^2m}{\cos^2d}.
\]
Taking square roots gives
\[
        \cos\left(\arctan\frac{r}{\sqrt{q^2-\rho^2}}\right)
        \le\frac{\cos m}{\cos d}.
\]
Both angles lie in \((0,\pi/2)\), so this is equivalent to \eqref{eq:algebraic-angle-conclusion}.
\end{proof}

Taking this together, we have the following proposition.

\begin{proposition}[Two-angle scalar bound]\label{prop:scalar-two-angle-bound}
Let
\[
        \pi>\mu_1\ge\mu_2\ge\cdots\ge\mu_n>-\pi/2,
        \qquad
        \mu_1-\mu_n<\pi,
\]
let \(p_j\ge0\), \(\sum_jp_j=1\), and define \(q,r,\rho\) by \eqref{eq:p-q-r-rho-def}.

\begin{enumerate}[label=(\alph*)]
\item If \(\mu_1+\mu_2\le\pi\), then \(q\le\rho\).

\item If \(\mu_1+\mu_2>\pi\) and \(q>\rho\), then $r>0$ and
\begin{equation}\label{eq:scalar-two-angle-bound}
        \arctan\frac{r}{\sqrt{q^2-\rho^2}}
        \ge
        \Theta,
\end{equation}
where \(0<\Theta<\pi/2\) is determined by
\begin{equation}\label{eq:Theta-mu-scalar}
        \cos\Theta=
        -\frac{\cos((\mu_1+\mu_2)/2)}
              {\cos((\mu_1-\mu_2)/2)}.
\end{equation}
\end{enumerate}
\end{proposition}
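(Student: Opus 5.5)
Part (a) is exactly \Cref{lem:scalar-obstruction}, so nothing new is needed there. For part (b), the approach is to combine \Cref{lem:rotated-estimate} with \Cref{lem:algebraic-angle}. The natural parameter choice is $m:=\pi-s$ and $d:=(\mu_1-\mu_2)/2$, where $s=(\mu_1+\mu_2)/2$.

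First, I invoke \Cref{lem:rotated-estimate}. Under $\mu_1+\mu_2>\pi$ it gives $0\le d<\pi-s<\pi/2$, so $0\le d<m<\pi/2$. It also gives the inequality $q\sin s+r\cos s\le\rho\sin d$. Because $q>\rho$ is assumed, the same lemma yields $r>0$, which proves the first assertion of (b). Moreover $\rho\ge0$ by definition, so all sign hypotheses of \Cref{lem:algebraic-angle} hold.

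Second, I rewrite the rotated estimate in the form of \eqref{eq:algebraic-hypothesis}. Since $\sin s=\sin(\pi-m)=\sin m$ and $\cos s=-\cos m$, the estimate becomes $q\sin m-r\cos m\le\rho\sin d$, i.e.
\[
r\cos m-q\sin m\ge-\rho\sin d .
\]
\Cref{lem:algebraic-angle} then gives
\[
\arctan\frac{r}{\sqrt{q^2-\rho^2}}\ge\arccos\frac{\cos m}{\cos d}.
\]
Finally, $\cos m=-\cos s=-\cos((\mu_1+\mu_2)/2)$, so the right-hand side equals $\Theta$ as defined in \eqref{eq:Theta-mu-scalar}.

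It remains to check that $\Theta$ is well defined and lies in $(0,\pi/2)$. From $0\le d<m<\pi/2$ we get $0<\cos m<\cos d$, so the quotient $\cos m/\cos d$ lies in $(0,1)$, and hence $\arccos$ of it lies in $(0,\pi/2)$. The only delicate point is this sign bookkeeping when converting $s$ to $m=\pi-s$; the rest is direct substitution into the earlier lemmas, so I expect no real obstacle.
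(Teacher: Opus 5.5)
Your proposal is correct and follows essentially the same route as the paper: part (a) via \Cref{lem:scalar-obstruction}, and part (b) by setting $m=\pi-s$, using \Cref{lem:rotated-estimate} to obtain $0\le d<m<\pi/2$, $r>0$ and the rotated inequality, and then applying \Cref{lem:algebraic-angle} after rewriting with $\sin m=\sin s$ and $\cos m=-\cos s$. Your extra check that $\cos m/\cos d\in(0,1)$, so that $\Theta\in(0,\pi/2)$ is well defined, is correct; the paper carries out that check in the proof of \Cref{thm:main} instead.
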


\begin{proof}
Part (a) is \Cref{lem:scalar-obstruction}.  For part (b), set
\[
        s=\frac{\mu_1+\mu_2}{2},
        \qquad
        d=\frac{\mu_1-\mu_2}{2},
        \qquad
        m=\pi-s.
\]
By \Cref{lem:rotated-estimate}, $0\le d<m<\pi/2$, $r>0$, and
\[
        q\sin s+r\cos s\le\rho\sin d.
\]
Since $\cos m=-\cos s$ and $\sin m=\sin s$, this is equivalent to
\[
        r\cos m-q\sin m\ge -\rho\sin d.
\]
Therefore \Cref{lem:algebraic-angle} gives
\[
        \arctan\frac{r}{\sqrt{q^2-\rho^2}}
        \ge
        \arccos\frac{\cos m}{\cos d}
        =
        \arccos\!\left(
        -\frac{\cos((\mu_1+\mu_2)/2)}
              {\cos((\mu_1-\mu_2)/2)}
        \right),
\]
which is exactly \eqref{eq:scalar-two-angle-bound}--\eqref{eq:Theta-mu-scalar}.
\end{proof}

\subsection{Proof of Theorem~\ref{thm:main}}\label{subsec:main-theorem}

\begin{proof}[Proof of Theorem~\ref{thm:main}]
The proof consists of a lower bound and a matching construction.  The lower bound uses the two-dimensional witness bound from \Cref{sec:forced-compression} and the scalar optimization from \Cref{sec:scalar-optimization}.  The construction is a single real rotation in the coordinate plane corresponding to the two largest phases of \(M\), cf.~the proof of \Cref{prop:complex_real_margin_same}.

Suppose first that a destabilizing strictly accretive real \(\Delta\) exists. By \Cref{prop:witness-bound}, the associated weights satisfy \(q>\rho\).  If \(\mu_1+\mu_2\le\pi\), this contradicts \Cref{prop:scalar-two-angle-bound}(a).  Therefore no such \(\Delta\) exists in this case.  This proves item (i).

Now assume \(\mu_1+\mu_2>\pi\).  Let \(\Delta\) be any destabilizing real strictly accretive perturbation.  Combining \Cref{prop:witness-bound} and \Cref{prop:scalar-two-angle-bound}(b) gives
\[
        \phi_1(\Delta)
        \ge
        \Theta,
\]
where
\[
        \cos\Theta
        =-
        \frac{\cos((\mu_1+\mu_2)/2)}
             {\cos((\mu_1-\mu_2)/2)}.
\]
Thus \(\theta_{\R}(M)\ge\Theta\).
It remains to show that this lower bound is attained. To this end, define
\[
        Q_\Theta:=
        \begin{bmatrix}
        \cos\Theta&-\sin\Theta\\
        \sin\Theta& \cos\Theta
        \end{bmatrix},
        \qquad
        \Delta_\Theta:=Q_\Theta\oplus I_{n-2},
\]
with the identity block omitted when \(n=2\).  Because \(\mu_1+\mu_2>\pi\), the assumptions imply $\mu_2>0$. Moreover, with $s=(\mu_1+\mu_2)/2$ and $d=(\mu_1-\mu_2)/2$, we have $\pi/2<s<\pi$ and $0\le d<\pi-s$, so $0<-\cos s/\cos d<1$. Hence $0<\Theta<\pi/2$. Therefore,
\[
        \Delta_\Theta+\Delta_\Theta^T=2(\cos\Theta)I_2\oplus 2I_{n-2} \succ 0,
\]
and the phases of \(\Delta_\Theta\) are \(\Theta, 0,\ldots,0, -\Theta\).  Thus, \(\phi_1(\Delta_\Theta)=\Theta\).
Finally, we note that the leading \(2\times2\) block of \(I+M\Delta_\Theta\) is singular, because
\begin{align*}
&\det\left(I_2+
        \diag(e^{\ii\mu_1},e^{\ii\mu_2})Q_\Theta\right) \\
&\quad=
        1+(e^{\ii\mu_1}+e^{\ii\mu_2})\cos\Theta
        +e^{\ii(\mu_1+\mu_2)} \\
&\quad=
        1- (e^{\ii\mu_1}+e^{\ii\mu_2})
        \frac{\cos((\mu_1+\mu_2)/2)}{\cos((\mu_1-\mu_2)/2)}
        +e^{\ii(\mu_1+\mu_2)} \\
&\quad=
        1-2e^{\ii(\mu_1+\mu_2)/2}\cos((\mu_1+\mu_2)/2)
        +e^{\ii(\mu_1+\mu_2)}=0.
\end{align*}
All remaining diagonal factors are \(1+e^{\ii\mu_j}\ne0\), since $-\pi/2<\mu_j<\pi$.  Hence \(\det(I+M\Delta_\Theta)=0\), whereby \(\theta_{\R}(M)\le\Theta\).  The lower and upper bounds coincide and prove \eqref{eq:main-formula}.
\end{proof}

\section{Conclusion}\label{sec:conclusion}
In this paper, we studied the real angular stability radius problem for sectorial matrices. In particular, we showed that the complex and the real radius can be equal, but that for diagonal and for complex symmetric matrices they are in general different. For such matrices, the radius -- which can be infinite if destabilization is not possible -- is determined by the two largest phases of the matrix rather than only by the largest one. In particular, in the upper-dominant case covered by \Cref{thm:main}, destabilization with a real accretive perturbation is only possible when these two phases satisfy $\phi_1(M) + \phi_2(M) >\pi$. The proof also identifies the mechanism behind this condition: it is forced by the result for matrices of size $2 \times 2$. More specifically, any real destabilizing perturbation gives rise to a two-dimensional real witness plane, obtained from the real and imaginary parts of a destabilizing vector. The compression of the destabilizing matrix to this plane retains enough phase information to give a sharp lower bound, and the bound is attained by a real rotation in the coordinate plane corresponding to the two largest phases.

The paper is a first investigation of the real angular stability radius, and there are several open and interesting questions. For example, it would be of interest to better understand the real angular stability radius for sectorial matrices that are neither diagonal nor complex symmetric, as well as to investigate the radius for matrices $M$ that are not necessarily sectorial. It would also be worthwhile to extend the present finite-dimensional matrix results to the system-theoretic phase setting, where phase concepts for linear and nonlinear dynamical systems have recently been developed; see \Cref{rem:phases_mimo_systems}.

\section*{Declaration of generative AI and AI-assisted technologies in the manuscript preparation process}

During the preparation of this work, the authors used ChatGPT 5.5 Pro in order to develop the main proof; it was developed over multiple queries, and in the queries the authors included drafts of incomplete attempts to prove the statement which were produced by the authors.%
\footnote{In particular, the proof in \Cref{app:2times2} was derived without the use of any AI technology, and it was supplied to the AI tool during the derivation of the main proof.}
The authors also used ChatGPT 5.5 Pro to assist with text refinement. After using this tool/service, the authors reviewed and edited the content as needed, and take full responsibility for the content of the published article.

\section*{Acknowledgments}

S.Z.K.'s work was supported in part by the National Science and Technology Council of Taiwan under grants 113-2222-E-110-002-MY3, 115-2218-E-007-003, 114-2622-8-110-003, and 115-2221-E-110-058-MY2. A.R.~acknowledge financial supported in part by the Knut and Alice Wallenberg foundation under Grant KAW 2018.0349, and in part by the Wallenberg AI, Autonomous Systems and Software Program (WASP) funded by the Knut and Alice Wallenberg Foundation.

\bibliographystyle{plain}
\bibliography{}

\begin{thebibliography}{10}

\bibitem{anderson1988hilbert}
Brian~D.O. Anderson and Michael Green.
\newblock Hilbert transform and gain/phase error bounds for rational functions.
\newblock {\em IEEE Transactions on Circuits and Systems}, 35(5):528--535,
  1988.

\bibitem{ArlinskiiPopov2003}
Yu.~M. Arlinskii and A.~B. Popov.
\newblock On sectorial matrices.
\newblock {\em Linear Algebra and its Applications}, 370:133--146, 2003.

\bibitem{BallantineJohnson1975}
C.~S. Ballantine and C.~R. Johnson.
\newblock Accretive matrix products.
\newblock {\em Linear and Multilinear Algebra}, 3(3):169--185, 1975.

\bibitem{BarOnJonckheere1990}
J.~R. Bar-On and E.~A. Jonckheere.
\newblock Phase margins for multivariable control systems.
\newblock {\em International Journal of Control}, 52(2):485--498, 1990.

\bibitem{BedraniKittanehSababheh2021Positivity}
Yassine Bedrani, Fuad Kittaneh, and Mohammed Sababheh.
\newblock From positive to accretive matrices.
\newblock {\em Positivity}, 25(4):1601--1629, 2021.

\bibitem{BedraniKittanehSababheh2021AFA}
Yassine Bedrani, Fuad Kittaneh, and Mohammed Sababheh.
\newblock On the weighted geometric mean of accretive matrices.
\newblock {\em Annals of Functional Analysis}, 12(2), 2021.

\bibitem{Byers1988}
Ralph Byers.
\newblock A bisection method for measuring the distance of a stable matrix to
  the unstable matrices.
\newblock {\em {SIAM} Journal on Scientific and Statistical Computing},
  9(5):875--881, 1988.

\bibitem{chen2020phase}
Chao Chen, Di~Zhao, Wei Chen, Sei~Zhen Khong, and Li~Qiu.
\newblock Phase of nonlinear systems.
\newblock {\em Preprint: arXiv:2012.00692}, 2020.

\bibitem{chen2025singular}
Chao Chen, Di~Zhao, and Sei~Zhen Khong.
\newblock The singular angle of nonlinear systems.
\newblock {\em Automatica}, 181:112515, 2025.

\bibitem{chen1998multivariable}
Jie Chen.
\newblock Multivariable gain-phase and sensitivity integral relations and
  design trade-offs.
\newblock {\em IEEE Transactions on Automatic Control}, 43(3):373--385, 1998.

\bibitem{ChenWangKhongQiu2019}
Wei Chen, Dan Wang, Sei~Zhen Khong, and Li~Qiu.
\newblock Phase analysis of {MIMO} {LTI} systems.
\newblock In {\em Proceedings of the 58th {IEEE} Conference on Decision and
  Control}, pages 6062--6067, 2019.

\bibitem{ChenWangKhongQiu2024}
Wei Chen, Dan Wang, Sei~Zhen Khong, and Li~Qiu.
\newblock A phase theory of multi-input multi-output linear time-invariant
  systems.
\newblock {\em {SIAM} Journal on Control and Optimization}, 62(2):1235--1260,
  2024.

\bibitem{DePrimaJohnson1974}
Charles~R. DePrima and Charles~R. Johnson.
\newblock The range of {$A^{-1}A^*$} in {$GL(n,\mathbb C)$}.
\newblock {\em Linear Algebra and its Applications}, 9:209--222, 1974.

\bibitem{Doyle1982}
J.~C. Doyle.
\newblock Analysis of feedback systems with structured uncertainties.
\newblock {\em {IEE} Proceedings D - Control Theory and Applications},
  129(6):242--250, 1982.

\bibitem{drury2015principal}
Stephen Drury.
\newblock Principal powers of matrices with positive definite real part.
\newblock {\em Linear and Multilinear Algebra}, 63(2):296--301, 2015.

\bibitem{DruryLin2014}
Stephen Drury and Minghua Lin.
\newblock Singular value inequalities for matrices with numerical ranges in a
  sector.
\newblock {\em Operators and Matrices}, 8(4):1143--1148, 2014.

\bibitem{FurtadoJohnson2001}
Susana Furtado and Charles~R. Johnson.
\newblock Spectral variation under congruence.
\newblock {\em Linear and Multilinear Algebra}, 49(3):243--259, 2001.

\bibitem{FurtadoJohnson2003}
Susana Furtado and Charles~R. Johnson.
\newblock Spectral variation under congruence for a nonsingular matrix with 0
  on the boundary of its field of values.
\newblock {\em Linear Algebra and its Applications}, 359:67--78, 2003.

\bibitem{GeorgeIkramov2005}
A.~George and Kh.~D. Ikramov.
\newblock On the properties of accretive-dissipative matrices.
\newblock {\em Mathematical Notes}, 77(6):767--776, 2005.

\bibitem{GreenLimebeer1995}
Michael Green and David J.~N. Limebeer.
\newblock {\em Linear Robust Control}.
\newblock Prentice Hall, Englewood Cliffs, NJ, 1995.

\bibitem{haddad1992there}
Wassim~M Haddad and DS~Bernstein.
\newblock Is there more to robust control theory than small gain?
\newblock In {\em Proceedings of the 1992 American Control Conference}, pages
  83--84, 1992.

\bibitem{HinrichsenPritchard1986a}
D.~Hinrichsen and A.~J. Pritchard.
\newblock Stability radii of linear systems.
\newblock {\em Systems \& Control Letters}, 7(1):1--10, 1986.

\bibitem{HinrichsenPritchard1986b}
D.~Hinrichsen and A.~J. Pritchard.
\newblock Stability radius for structured perturbations and the algebraic
  {Riccati} equation.
\newblock {\em Systems \& Control Letters}, 8(2):105--113, 1986.

\bibitem{HinrichsenPritchard1990}
D.~Hinrichsen and A.~J. Pritchard.
\newblock Real and complex stability radii: a survey.
\newblock In D.~Hinrichsen and B.~M{\aa}rtensson, editors, {\em Control of
  Uncertain Systems}, pages 119--162. Birkh{\"a}user, Boston, 1990.

\bibitem{HornSteinberg1959}
Alfred Horn and Robert Steinberg.
\newblock Eigenvalues of the unitary part of a matrix.
\newblock {\em Pacific Journal of Mathematics}, 9(2):541--550, 1959.

\bibitem{horn1994topics}
Roger~A. Horn and Charles~R. Johnson.
\newblock {\em Topics in matrix analysis}.
\newblock Cambridge University Press, New York, NY, 1994.

\bibitem{HornSergeichuk2006}
Roger~A. Horn and Vladimir~V. Sergeichuk.
\newblock Canonical forms for complex matrix congruence and {*}congruence.
\newblock {\em Linear Algebra and its Applications}, 416(2--3):1010--1032,
  2006.

\bibitem{JohnsonFurtado2001}
Charles~R. Johnson and Susana Furtado.
\newblock A generalization of {Sylvester}'s law of inertia.
\newblock {\em Linear Algebra and its Applications}, 338(1--3):287--290, 2001.

\bibitem{LiSze2014}
Chi-Kwong Li and Nung-Sing Sze.
\newblock Determinantal and eigenvalue inequalities for matrices with numerical
  ranges in a sector.
\newblock {\em Journal of Mathematical Analysis and Applications},
  410(1):487--491, 2014.

\bibitem{MaoChenQiu2022}
Xin Mao, Wei Chen, and Li~Qiu.
\newblock Phases of discrete-time {LTI} multivariable systems.
\newblock {\em Automatica}, 142:110311, 2022.

\bibitem{marshall2011inequalities}
Albert~W Marshall, Ingram Olkin, and Barry~C Arnold.
\newblock {\em Inequalities: Theory of Majorization and its Applications}.
\newblock Springer, New York, NY, 2nd edition, 2011.

\bibitem{owens1984numerical}
David~H Owens.
\newblock The numerical range: a tool for robust stability studies?
\newblock {\em Systems \& Control Letters}, 5(3):153--158, 1984.

\bibitem{PackardDoyle1993}
A.~Packard and J.~Doyle.
\newblock The complex structured singular value.
\newblock {\em Automatica}, 29(1):71--109, 1993.

\bibitem{PostlethwaiteEdmundsMacFarlane1981}
I.~Postlethwaite, J.~M. Edmunds, and A.~G.~J. MacFarlane.
\newblock Principal gains and principal phases in the analysis of linear
  multivariable feedback systems.
\newblock {\em {IEEE} Transactions on Automatic Control}, 26(1):32--46, 1981.

\bibitem{QiuBernhardssonRantzerDavisonYoungDoyle1995}
Li~Qiu, Bo~Bernhardsson, Anders Rantzer, E.~J. Davison, P.~M. Young, and J.~C.
  Doyle.
\newblock A formula for computation of the real stability radius.
\newblock {\em Automatica}, 31(6):879--890, 1995.

\bibitem{QiuDavison1989}
Li~Qiu and E.~J. Davison.
\newblock A simple procedure for the exact stability robustness computation of
  polynomials with affine coefficient perturbations.
\newblock {\em Systems \& Control Letters}, 13(5):413--420, 1989.

\bibitem{QiuDavison1991}
Li~Qiu and E.~J. Davison.
\newblock The stability robustness determination of state space models with
  real unstructured perturbations.
\newblock {\em Mathematics of Control, Signals, and Systems}, 4(3):247--267,
  1991.

\bibitem{QiuDavison1992}
Li~Qiu and E.~J. Davison.
\newblock Bounds on the real stability radius.
\newblock In M.~Mansour, S.~Balemi, and W.~Truol, editors, {\em Robustness of
  Dynamic Systems with Parameter Uncertainties}, pages 139--145.
  Birkh{\"a}user, Basel, 1992.

\bibitem{RinghQiu2022}
Axel Ringh and Li~Qiu.
\newblock Finsler geometries on strictly accretive matrices.
\newblock {\em Linear and Multilinear Algebra}, 70(20):6753--6771, 2022.

\bibitem{VanLoan1985}
Charles Van~Loan.
\newblock How near is a stable matrix to an unstable matrix?
\newblock In {\em Linear Algebra and Its Role in Systems Theory}, volume~47 of
  {\em Contemporary Mathematics}, pages 465--478. American Mathematical
  Society, 1985.

\bibitem{WangChenKhongQiu2020}
Dan Wang, Wei Chen, Sei~Zhen Khong, and Li~Qiu.
\newblock On the phases of a complex matrix.
\newblock {\em Linear Algebra and its Applications}, 593:152--179, 2020.

\bibitem{WangMaoChenQiu2023}
Dan Wang, Xin Mao, Wei Chen, and Li~Qiu.
\newblock On the phases of a semi-sectorial matrix and the essential phase of a
  {Laplacian}.
\newblock {\em Linear Algebra and its Applications}, 676:441--458, 2023.

\bibitem{yang2025small}
Xiaokan Yang, Wei Chen, and Li~Qiu.
\newblock The small phase condition is necessary for symmetric systems.
\newblock {\em Preprint: arXiv:2507.06617}, 2025.

\bibitem{zhang2025matrix}
Ding Zhang, Axel Ringh, and Li~Qiu.
\newblock Matrix completion and decomposition in phase-bounded cones.
\newblock {\em SIAM Journal on Matrix Analysis and Applications},
  46(2):837--857, 2025.

\bibitem{zhang2025phantom}
Ding Zhang, Xiaokan Yang, Axel Ringh, and Li~Qiu.
\newblock The phantom of {D}avis-{W}ielandt shell: A unified framework for
  graphical stability analysis of mimo lti systems.
\newblock {\em Preprint: arXiv:2507.19918}, 2025.

\bibitem{Zhang2015}
Fuzhen Zhang.
\newblock A matrix decomposition and its applications.
\newblock {\em Linear and Multilinear Algebra}, 63(10):2033--2042, 2015.

\bibitem{zhao2022low}
Di~Zhao, Axel Ringh, Li~Qiu, and Sei~Zhen Khong.
\newblock Low phase-rank approximation.
\newblock {\em Linear Algebra and its Applications}, 639:177--204, 2022.

\bibitem{ZhouDoyleGlover1996}
Kemin Zhou, John~C. Doyle, and Keith Glover.
\newblock {\em Robust and Optimal Control}.
\newblock Prentice Hall, Upper Saddle River, NJ, 1996.

\end{thebibliography}

\appendix

\crefalias{section}{appendix}

\section{Alternative proof for the \texorpdfstring{$2 \times 2$}{2 x 2} case}\label{app:2times2}

For this proof, we need the notion of majorization; we only introduce the basic notions and we refer to, e.g., \cite{marshall2011inequalities} for a comprehensive treatment of the subject.%
\footnote{Note that the phases of a sectorial matrix and the phases of the eigenvalues of the same matrix obey majorization inequalities \cite[Thm.~1]{FurtadoJohnson2001}, \cite[Lem.~2.3]{WangChenKhongQiu2020} (cf.~\cite[Lem.~9]{HornSteinberg1959}); these are analogous to majorization inequalities between singular values and magnitudes of eigenvalues, see, e.g., \cite[Thm~9.E.1]{marshall2011inequalities}.}
To this end, let $x, y \in \mR^n$; by $x^{\downarrow}$ we denote the vector obtained by sorting $x$ in a nonincreasing order, i.e., $x^{\downarrow} = [x_k^\downarrow]_{k=1}^n$ where $x_1^\downarrow \geq x_2^\downarrow \geq \ldots \geq x_n^{\downarrow}$. We say that $x$ is weakly majorized by $y$, and write $x \prec_w y$, if
\[
\sum_{k = 1}^{\ell} x_k^\downarrow \leq \sum_{k = 1}^{\ell} y_k^\downarrow, \quad \ell = 1, \ldots, n.
\]
Moreover, we say $x$ is majorized by $y$, denoted by $x \prec_m y$, if $x \prec_w y$ and $\sum_{k = 1}^{n} x_k^\downarrow = \sum_{k = 1}^{n} y_k^\downarrow$.

Now, consider a diagonal unitary matrix
\[
        M=
        \begin{bmatrix}
        e^{\ii\alpha}&0\\
        0&e^{\ii\beta}
        \end{bmatrix},
\]
with $\alpha\in(\pi/2,\pi)$ and $\beta\in[\pi-\alpha,\alpha]$.
Define
\[
        \widehat\theta(M)
        =\inf\{\phi_1(Q) \; : \;Q\in\R^{2\times2},\ Q\text{ is orthogonal},\
        Q+Q^T \succ 0\text{ and }\det(I+MQ)=0\},
\]
i.e., the problem in \eqref{eq:strict-radius} but with the extra constraint that $Q$ must be orthogonal. All strictly accretive orthogonal matrices $Q$ can be parametrized as
\[
        Q=\begin{bmatrix}
        \cos\theta&-\sin\theta\\
        \sin\theta&\cos\theta
        \end{bmatrix},
\]
where $\theta \in (-\pi/2, \pi/2)$.
Since $Q$ is invertible, we have that $\det(I+MQ)=0$ if and only if $\det(Q^{-1}+M)=0$. A direct calculation now gives that
\[
         \det(Q^{-1}+M)
        =1+(e^{\ii\alpha}+e^{\ii\beta})\cos\theta+e^{\ii(\alpha+\beta)}=0.
\]
Therefore,
\[        \cos\widehat\theta(M)
        =-\frac{1+\cos(\alpha+\beta)}{\cos\alpha+\cos\beta}
        =-\frac{\sin(\alpha+\beta)}{\sin\alpha+\sin\beta}
        =-\frac{\cos\frac{\alpha+\beta}{2}}{\cos\frac{\alpha-\beta}{2}},
\]
and hence
\begin{equation}\label{eq:thetahat}
        \widehat\theta(M)
        =\arccos\!\left(-\frac{\cos\frac{\alpha+\beta}{2}}
                              {\cos\frac{\alpha-\beta}{2}}\right).
\end{equation}
It remains to show that this is indeed the real angular stability radius, i.e., that the restriction to orthogonal matrices does not make the radius smaller.

To this end, we need the following two lemmas. They can both be inferred from \cite[Lem.~9]{HornSteinberg1959}.

\begin{lemma}\label{lem:angle_eig_angle_sectorial}
Let $A\in\C^{n\times n}$ be a nonsingular matrix and let $A=QP$ be its polar decomposition.  If $Q$ is sectorial, then
\[
        \angle\lambda(A) \prec_m \phi(Q).
\]
\end{lemma}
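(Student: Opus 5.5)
The plan is to reduce the statement to the known majorization between the phases of the eigenvalues of a sectorial matrix and its matrix phases. That majorization is quoted in the footnote above: \cite[Thm.~1]{FurtadoJohnson2001}, \cite[Lem.~2.3]{WangChenKhongQiu2020}, cf.~\cite[Lem.~9]{HornSteinberg1959}. The difficulty is that \(A=QP\) itself need not be sectorial, so we pass to a similar matrix that is sectorial.

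The first step is to let \(P^{1/2}\succ0\) be the positive definite square root of \(P\) and set
\[
        B:=P^{1/2}AP^{-1/2}=P^{1/2}QP^{1/2}.
\]
Two facts follow directly. First, \(B\) is similar to \(A\), so \(\lambda(B)=\lambda(A)\). Second, \(B=(P^{1/2})^*QP^{1/2}\) is congruent to \(Q\). Since \(Q\) is sectorial, \(B\) is sectorial, and by congruence invariance of phases \(\phi(B)=\phi(Q)\). These phases are taken in the same branch interval of length less than \(\pi\), which contains \(W'(Q)\) and hence also \(W'(B)\).

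We should also identify \(\phi(Q)\) concretely. Since \(Q\) is unitary, it is normal, so \(Q=U^*\operatorname{diag}(e^{\ii\theta_1},\ldots,e^{\ii\theta_n})U\) with \(U\) unitary. This is itself a sectorial decomposition as in \eqref{eq:sect_decomp}. Hence \(\phi(Q)\) is just the vector of arguments of the eigenvalues of \(Q\), chosen in that branch. This is not strictly needed, but it makes the statement concrete.

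The second step is to apply the quoted eigenvalue-phase majorization to \(B\). This gives \(\angle\lambda(B)\prec_m\phi(B)\), and therefore \(\angle\lambda(A)\prec_m\phi(Q)\). If one prefers to verify the majorization directly rather than cite it, the plan is as follows.

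\begin{itemize}[label=\textbullet]
\item \emph{Weak majorization.} Use an upper-triangular Schur form \(B=V^*TV\). For each \(k\), compress \(B\) to the span of the first \(k\) Schur vectors; the compression has eigenvalues \(\lambda_1,\ldots,\lambda_k\). The partial sum \(\sum_{j\le k}\angle\lambda_j\) is the argument of the determinant of this compression. Compression interlacing for phases (as in \Cref{lem:compression}) together with the determinant bound for a \(k\times k\) sectorial matrix then bounds that argument by \(\sum_{j\le k}\phi_j(B)\).
\item \emph{Equality of total sums.} We have \(\det A=\det Q\det P\) with \(\det P>0\), so \(\sum_j\angle\lambda_j(A)\equiv\sum_j\phi_j(Q)\pmod{2\pi}\).
\end{itemize}

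The step I expect to be the main obstacle is the branch bookkeeping, that is, upgrading the congruence modulo \(2\pi\) to exact equality. Every eigenvalue of \(B\) lies in \(W(B)\subset W'(B)\), so each \(\angle\lambda_j(A)\) can be chosen in the same open interval of length less than \(\pi\) as the phases \(\phi_j(Q)\); this is the sense in which \(\angle\lambda(A)\) must be read.

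To get exact equality, I would argue by continuity. Consider the path \(B_t=P_t^{1/2}QP_t^{1/2}\) with \(P_t=(1-t)I+tP\) for \(t\in[0,1]\). Each \(B_t\) is congruent to \(Q\), hence sectorial with the same branch, so it is nonsingular with all eigenvalue arguments in the fixed interval. Therefore \(\sum_j\angle\lambda_j(B_t)\) is a continuous function of \(t\) that is constant modulo \(2\pi\), so it is constant. At \(t=0\) it equals \(\sum_j\phi_j(Q)\), because \(B_0=Q\). Combined with the weak majorization, this gives \(\angle\lambda(A)\prec_m\phi(Q)\).
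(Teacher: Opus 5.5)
Your proof is correct. The paper does not actually prove this lemma: it only says that it (together with \Cref{lem:angles_polar_angles_sectorial}) ``can be inferred from'' Horn--Steinberg's Lemma~9, so it takes the polar-factor formulation as known.

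You instead give a genuine reduction. The matrix $A=QP$ is similar to $B=P^{1/2}QP^{1/2}$, and $B$ is $*$-congruent to $Q$. The lemma therefore follows immediately from the eigenvalue--phase majorization $\angle\lambda(B)\prec_m\phi(B)$ for sectorial $B$, which the paper already quotes in its footnote (Furtado--Johnson, Wang et al.). Your route buys two things over the bare citation:
\begin{itemize}
\item It makes the branch convention for $\angle\lambda(A)$ explicit. The eigenvalues lie in $W(B)\subset W'(Q)$, a point the paper leaves implicit.
\item It shows that the polar-factor statement is the same fact as the sectorial one. The mirror similarity $P^{-1/2}QP^{1/2}=P^{-1/2}AP^{-1/2}$ yields \Cref{lem:angles_polar_angles_sectorial} in the same way.
\end{itemize}
The citation route is, of course, shorter.

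One minor point concerns your optional direct verification. In the weak-majorization step, you need the exact identity $\sum_{j\le k}\angle\lambda_j=\sum_{j\le k}\phi_j(C)$ for each compression $C$, not just equality modulo $2\pi$. This also requires ordering the Schur form by nonincreasing argument. Your continuity argument adapts to this case: join the congruence factor of $C$ to $I$ inside $\mathbb{GL}_k(\C)$. As written, however, you only ran it for the full sum of $B$.
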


\begin{lemma}\label{lem:angles_polar_angles_sectorial}
Let $A\in\C^{n\times n}$ be a sectorial matrix with phases in $(\theta,\theta+\pi)$, where $\theta\in(-\pi,\pi]$, and let $A=QP$ be its polar decomposition.  It holds that
\[
        \phi(Q) \prec_m \phi(A).
\]
\end{lemma}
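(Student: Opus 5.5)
The plan is to reduce the statement to the known majorization between the phases of the eigenvalues of a sectorial matrix and its matrix phases (\cite[Thm.~1]{FurtadoJohnson2001}, \cite[Lem.~2.3]{WangChenKhongQiu2020}, cf.\ the footnote above). We use the fact that the unitary factor $Q$ is similar to a matrix that is congruent to $A$. Throughout, angles are taken in the branch $(\theta,\theta+\pi)$ fixed by the hypothesis.

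\textbf{Step 1: a similarity.} Since $A$ is sectorial it is nonsingular. Hence $P=(A^*A)^{1/2}\succ 0$ and $Q=AP^{-1}$ is unitary. Conjugating by $P^{1/2}$ gives
\[
P^{-1/2}QP^{1/2}=P^{-1/2}AP^{-1/2}=:B .
\]
The matrix $B=(P^{-1/2})^*A\,P^{-1/2}$ is congruent to $A$. By the sectorial decomposition \eqref{eq:sect_decomp} and congruence invariance of phases, $B$ is sectorial with $\phi(B)=\phi(A)\subset(\theta,\theta+\pi)$.

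\textbf{Step 2: $Q$ is sectorial and its phases are the angles of its eigenvalues.} Since $Q$ and $B$ are similar, $\lambda(Q)=\lambda(B)\subset W'(B)$. The set $W'(B)$ is the closed convex cone spanned by the phases of $B$, so it lies in the open sector $\{re^{\ii t}: r>0,\ t\in(\theta,\theta+\pi)\}$. Because $Q$ is unitary, hence normal, $W(Q)$ is the convex hull of its eigenvalues. These all lie in a sector of opening less than $\pi$, so $0\notin W(Q)$ and $Q$ is sectorial. A unitary diagonalization $Q=U^*\diag(e^{\ii\omega_k})U$ is itself a sectorial decomposition, so $\phi(Q)=\angle\lambda(Q)$, with all angles taken in $(\theta,\theta+\pi)$.

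\textbf{Step 3: majorization.} Combining the previous steps,
\[
\phi(Q)=\angle\lambda(Q)=\angle\lambda(B)\prec_m\phi(B)=\phi(A),
\]
where the middle relation is the eigenvalue--phase majorization for the sectorial matrix $B$. Equality of the sums can be checked directly: $\det B=\det(P)^{-1}\det A$ and $\det A=|\det T|^2\prod_j e^{\ii\phi_j(A)}$, so $\sum_k\angle\lambda_k(B)\equiv\sum_j\phi_j(A)$ modulo $2\pi$.

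\textbf{Main obstacle.} The delicate point is the branch bookkeeping. Every angle must be taken in the single interval $(\theta,\theta+\pi)$, so that the congruence $\mod 2\pi$ above becomes an actual equality of sums and the partial-sum inequalities make sense. I would handle this by first rotating $A\mapsto e^{-\ii(\theta+\pi/2)}A$, which rotates $Q$ by the same factor and leaves $P$ unchanged. After this rotation all phases lie in $(-\pi/2,\pi/2)$ and $A$ is strictly accretive. Then $\sum_j\phi_j\in(-n\pi/2,n\pi/2)$ for both vectors. The determinant congruence alone does not pin down the sums, since two numbers in an interval of length $n\pi$ can still differ by a multiple of $2\pi$. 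The equality of sums is instead part of the cited majorization theorem, which I would apply to $B$ directly in this normalized branch.
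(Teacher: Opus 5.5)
Your proof is correct, but it does not follow the paper's route, because the paper gives no argument for this lemma at all. It only remarks that this lemma, like \Cref{lem:angle_eig_angle_sectorial}, can be inferred from \cite[Lem.~9]{HornSteinberg1959}.

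You make that inference explicit through the similarity $P^{-1/2}QP^{1/2}=P^{-1/2}AP^{-1/2}=B$. This does two jobs at once. First, it shows that $Q$ is sectorial with $\phi(Q)=\angle\lambda(Q)$ in the branch $(\theta,\theta+\pi)$, which the statement tacitly presupposes for $\phi(Q)$ to be defined. Second, it reduces the claim to the eigenvalue--phase majorization of \cite[Thm.~1]{FurtadoJohnson2001}, \cite[Lem.~2.3]{WangChenKhongQiu2020}, applied to $B$, which is congruent to $A$.

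The paper's citation is shorter. Your route is transparent, and it also shows that the two appendix lemmas are not independent. Write $A=T^*DT$ and $S=TP^{-1/2}$. Then $B=S^*DS$, whose eigenvalues are those of $D\,SS^*$. So \Cref{lem:angle_eig_angle_sectorial}, applied to the sectorial unitary $D$ and the positive definite factor $SS^*$, gives the result directly.

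One small correction concerns Step 3. As you note yourself at the end, the determinant computation only gives the sums modulo $2\pi$, so ``can be checked directly'' overstates it. The equality of the sums genuinely comes from the cited theorem. Alternatively, it follows from a continuity argument along a path $S_t$ from $I$ to $S$ in $\gln(\C)$: the eigenvalues of $S_t^*DS_t$ stay in the open sector throughout, so the continuous sum of their angles is constant.
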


\begin{theorem}
Consider a diagonal unitary matrix $M\in\C^{2\times2}$ with
\[
        \phi_1(M)\in\left(\frac{\pi}{2},\pi\right),
        \qquad
        \phi_2(M)\in[\pi-\phi_1(M),\phi_1(M)].
\]
Then
\[
        \det(I+M\Delta)\ne0
        \quad\text{for all }\Delta\in\SR[\alpha]
\]
if and only if $\alpha<\widehat\theta(M)$.
\end{theorem}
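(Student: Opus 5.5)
This appendix theorem should be proved with the majorization tools of the two preceding lemmas rather than with the witness-plane argument of \Cref{sec:proof_main_result}. The ``if'' direction should follow from the ``only if'' direction once we know that $\widehat\theta(M)$ is attained, so the real content is a reduction: if some $\Delta\in\SR[\alpha]$ destabilizes $M$, then some strictly accretive real orthogonal $Q$ with $\phi_1(Q)\le\phi_1(\Delta)\le\alpha$ also destabilizes $M$. Then $\widehat\theta(M)\le\alpha$, which contradicts $\alpha<\widehat\theta(M)$.

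For sufficiency of $\alpha\ge\widehat\theta(M)$, the rotation $Q_{\widehat\theta}$ computed above is destabilizing. It lies in $\SR[\alpha]$ because its phases are $\pm\widehat\theta(M)$, and $\widehat\theta(M)<\pi/2$ since the ratio in \eqref{eq:thetahat} lies in $(0,1)$ under the stated ranges of $\alpha,\beta$. So it remains to show necessity.

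Let $\Delta\in\R^{2\times2}$ be strictly accretive with $\det(I+M\Delta)=0$, and write its real polar decomposition $\Delta=QP$. Here $Q$ is real orthogonal and $P\succ0$ is real symmetric.
\begin{enumerate}[label=(\roman*)]
\item Since $\Delta$ is sectorial with phases in $(-\pi/2,\pi/2)$, \Cref{lem:angles_polar_angles_sectorial} gives $\phi(Q)\prec_m\phi(\Delta)$. In particular $\phi_1(Q)\le\phi_1(\Delta)<\pi/2$, so $Q$ is a strictly accretive rotation with angle $\theta:=\phi_1(Q)$; note $\det Q=1$ is forced because $\phi(\Delta)$ sums to zero.
\item Destabilization means $-1\in\lambda(M\Delta)=\lambda(MQP)$, which has the same spectrum as $P^{1/2}MQP^{1/2}$. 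This matrix is congruent to $MQ$.
\item Consider the unitary $MQ$. Write $Q=U_2\operatorname{diag}(e^{\ii\theta},e^{-\ii\theta})U_2^*$, using the same $U_2$ as in \Cref{prop:complex_real_margin_same}. The idea is to show that, when $\theta<\widehat\theta(M)$, the matrix $MQ$ is sectorial with its phases lying strictly inside an open half-plane that excludes the ray through $-1$. Congruence preserves the angular field of values $W'$, and $\lambda(\cdot)\subset W(\cdot)\subset W'(\cdot)$. Hence $-1$ could not be an eigenvalue of $P^{1/2}MQP^{1/2}$, a contradiction. So necessarily $\theta\ge\widehat\theta(M)$, and therefore $\phi_1(\Delta)\ge\widehat\theta(M)$.
\end{enumerate}

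The main obstacle is step (iii). We must understand, as a function of $\theta$, the eigenvalue phases of the $2\times2$ unitary $MQ$, and hence its numerical range, which is the chord between its two eigenvalues. The eigenvalues satisfy $\lambda^2-(e^{\ii\alpha}+e^{\ii\beta})\cos\theta\,\lambda+e^{\ii(\alpha+\beta)}=0$. Their product is $e^{\ii(\alpha+\beta)}$, so their phases are $\frac{\alpha+\beta}{2}\pm\omega(\theta)$. Here $\cos\omega=\cos\theta\cos\frac{\alpha-\beta}{2}$, found by dividing the characteristic polynomial by $e^{\ii(\alpha+\beta)/2}\lambda$. The spread $\omega$ increases with $\theta$. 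The largest eigenvalue phase reaches $\pi$ exactly when $\cos\omega=-\cos\frac{\alpha+\beta}{2}$, i.e.\ at $\theta=\widehat\theta(M)$.

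For $\theta<\widehat\theta(M)$, both eigenvalue phases lie in $(\frac{\alpha+\beta}{2}-\omega,\pi)$, with total width $2\omega<\pi$. Then the chord joining the two eigenvalues misses the ray $\{-t:t\ge0\}$. I would check this width bound carefully, since it is what makes $W'(MQ)$ a proper sector. Lemma~\ref{lem:angle_eig_angle_sectorial} could alternatively be used to control $\angle\lambda(M\Delta)$ directly via majorization. However, the explicit $2\times2$ computation above seems more robust, so I would try it first.
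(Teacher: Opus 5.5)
Your proposal is correct. It shares the paper's first step, the real polar decomposition $\Delta=QP$ together with \Cref{lem:angles_polar_angles_sectorial} to get $\phi_1(Q)\le\phi_1(\Delta)$, but it closes the argument by a different route.

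\textbf{The paper's route.} It proves $MQ$ sectorial by the trace argument $\tr(MQ)=(e^{\ii\alpha}+e^{\ii\beta})\cos\theta\neq0$. It then invokes \Cref{lem:angle_eig_angle_sectorial} to get $\angle\lambda(M\Delta)\prec_m\phi(MQ)$. Finally it concludes from $\phi_1(MQ)<\pi$, which it justifies only by $\det(I+MQ)\ne0$; this tacitly needs a continuity argument in $\theta$.

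\textbf{Your route.} You use the similarity $M\Delta\sim P^{1/2}(MQ)P^{1/2}$, the invariance of $W'$ under $*$-congruence, and $\lambda\subset W\subset W'$. This is exactly the containment (weak) form of \Cref{lem:angle_eig_angle_sectorial}, and it is all that is needed. Your explicit eigenvalue phases $\frac{\alpha+\beta}{2}\pm\omega(\theta)$, with $\cos\omega=\cos\theta\cos\frac{\alpha-\beta}{2}$, replace both the trace argument and the implicit monotonicity step.

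\textbf{The width check you flag.} It goes through cleanly. For $\theta<\widehat\theta(M)$ you get $\omega<\pi-\frac{\alpha+\beta}{2}\le\pi/2$, and $\frac{\alpha+\beta}{2}\ge\pi/2$. Hence both eigenvalue phases lie in $(0,\pi)$. So the chord $W(MQ)$ sits in the open upper half-plane and $-1\notin W'(MQ)$.

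\textbf{Comparison.} Your version is more self-contained: it needs only one majorization lemma and makes the dependence on $\theta$ explicit. The paper's version is shorter and stays inside the majorization framework, yielding the full relation $\angle\lambda(M\Delta)\prec_m\phi(MQ)$ rather than mere containment.

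\textbf{One small slip.} The ratio in \eqref{eq:thetahat} lies in $[0,1)$, not $(0,1)$. At the endpoint $\phi_2(M)=\pi-\phi_1(M)$ it equals $0$, and $\det(I+MQ)=2\ii\sin\alpha\cos\theta$ vanishes for no strictly accretive rotation. So your existence half needs that endpoint excluded, or $\widehat\theta(M)$ read as $+\infty$. This wrinkle is inherited from the statement as posed.
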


\begin{proof}
The necessity part is straightforward; hence we only need to prove the sufficient part.

Consider $\Delta\in\SR[\alpha]$, where $\alpha<\widehat\theta(M)$.  Using polar decomposition, $\Delta$ can be expressed as $\Delta=QP$, where $P$ is positive definite and $Q$ is orthogonal.  According to \Cref{lem:angles_polar_angles_sectorial},
\[
        \phi(Q) \prec_m \phi(\Delta).
\]
Therefore $\phi_1(Q) \le \phi_1(\Delta)<\widehat\theta(M)$, which implies
\[
        \det(I+MQ)\ne0.
\]
We then show that $MQ$ is sectorial by contradiction.  Write $MQ$ in the matrix form
\[      MQ=
        \begin{bmatrix}
        e^{\ii\alpha}\cos\theta&-e^{\ii\alpha}\sin\theta\\
        e^{\ii\beta}\sin\theta&e^{\ii\beta}\cos\theta
        \end{bmatrix}.
\]
Since $MQ$ is unitary, every eigenvalue $\lambda$ of $MQ$ satisfies $|\lambda|=1$.  Suppose $MQ$ is not sectorial.  Then
\[
        \lambda_1(MQ)=-\lambda_2(MQ).
\]
It follows that
\begin{equation}\label{eq:trace-zero}
        \tr(MQ)=(e^{\ii\alpha}+e^{\ii\beta})\cos\theta
        =\lambda_1(MQ)+\lambda_2(MQ)=0.
\end{equation}
The condition under which equality \eqref{eq:trace-zero} holds is either $e^{\ii\alpha}+e^{\ii\beta}=0$ or $\cos\theta=0$.  However, $e^{\ii\alpha}+e^{\ii\beta}=0$ contradicts the assumption that $M$ is sectorial, and $\cos\theta=0$ contradicts the fact that $\theta<\widehat\theta(M)<\pi/2$.

Since $MQ$ is sectorial, by applying \Cref{lem:angle_eig_angle_sectorial}, we get that
\[
        \angle\lambda(M\Delta) = \angle\lambda(MQP) \prec_m \phi(MQ).
\]
Therefore
\[
        \angle\lambda(M\Delta)\le\phi_1(MQ) = \max \{\angle \lambda_1(MQ), \angle \lambda_2(MQ) \} <\pi,
\]
where the equality follows since the phases of a sectorial normal matrix are the phases of the eigenvalues \cite[p.~158]{WangChenKhongQiu2020}, and where the last inequality follows since $\det(I+MQ)\ne0$. Thus $\det(I+M\Delta)\ne0$ is established.
\end{proof}

\end{document}